\documentclass[12pt]{article}

\usepackage[margin=1in]{geometry}
\usepackage{amsmath,amsthm,amssymb,amsfonts}
\usepackage{mathtools}
\usepackage{hyperref}
\usepackage[numbers,sort&compress]{natbib}
\usepackage{enumitem}
\usepackage{microtype}
\usepackage{tikz}

\theoremstyle{plain}
\newtheorem{theorem}{Theorem}[section]
\newtheorem{proposition}[theorem]{Proposition}
\newtheorem{lemma}[theorem]{Lemma}
\newtheorem{corollary}[theorem]{Corollary}

\theoremstyle{definition}
\newtheorem{definition}[theorem]{Definition}

\theoremstyle{remark}
\newtheorem{remark}[theorem]{Remark}

\newcommand{\R}{\mathbb{R}}
\newcommand{\C}{\mathbb{C}}

\newcommand{\tr}{\operatorname{tr}}
\newcommand{\doi}[1]{\textsc{doi:}~\texttt{#1}}
\DeclareMathOperator{\dt}{det}
\DeclareMathOperator{\rank}{rank}
\DeclareMathOperator{\MV}{MV}
\DeclareMathOperator{\Newt}{Newt}
\DeclareMathOperator{\ordop}{ord}
\newcommand{\ip}[2]{\langle #1,\, #2\rangle}
\newcommand{\Hh}{\widehat{H}}

\title{An intrinsic characterization of the Bogdanov--Takens
	normal-form coefficients and a mixed-volume obstruction to
	non-isolated degeneracies}

\author{
	V. Castellanos
	\and
	E.~Chan-L\'opez\thanks{Corresponding author.}
}

\date{%
	Divisi\'on Acad\'emica de Ciencias B\'asicas,
	Universidad Ju\'arez Aut\'onoma de Tabasco, 86690 Cunduac\'an,
	Tabasco, Mexico}

\begin{document}
	\maketitle
	
	\begin{abstract}
		Let $X$ be a planar vector field with an equilibrium $p$ at which the
		Jacobian $J=DX(p)$ is nilpotent of rank one, and let $q_{0}$ span
		$\ker J$.  We prove that the two coefficients $a$ and $b$ of the
		Bogdanov--Takens (BT) normal form are the directional derivatives,
		along $q_{0}$, of the two invariants of the Jacobian:
		\[
		a=-\tfrac12\,\ip{\nabla\!\dt DX(p)}{q_{0}},
		\qquad
		b=\ip{\nabla\!\tr DX(p)}{q_{0}} .
		\]
		The identity is invariant under changes of phase-space coordinates and
		equivariant under the rescaling of $q_{0}$, and the resulting formula
		requires neither generalized eigenvectors nor the second-order
		multilinear form.  It
		yields a coordinate-free reading of the BT nondegeneracy conditions in
		terms of the kernel line field of the projection of the equilibrium
		manifold onto parameter space, the transformation rule
		$(a,b)\mapsto(h^{2}a,hb)$ under orbital equivalence, and the fact that
		$a=0$ whenever the vector field factors through a function vanishing at
		$p$.
		
		We then prove an obstruction of a combinatorial nature.  For a
		Kolmogorov system $\dot x=xA/g_{1}$, $\dot y=yB/g_{2}$ and an
		equilibrium $p$ in the torus $(\C^{*})^{2}$ at which the Jacobian is
		nilpotent and nonzero, the order $m=\ordop f$ in the Takens normal form
		is bounded by the mixed volume of the Newton polytopes of $A$ and $B$.  In
		particular, if $\MV(\Newt A,\Newt B)\le2$ then $a\neq0$ unless the
		equilibrium fails to be isolated, and the nilpotent singularities of
		saddle, focus and elliptic type are unreachable: only cusp-type
		singularities occur at isolated equilibria, while their exact
		codimension is not controlled by the mixed-volume bound.  The class of systems with cross-product cubic
		terms, for which the mixed volume equals $2$, is treated in detail, and
		two classical Bazykin models are shown to be instances.
	\end{abstract}
	
	\noindent\textbf{Keywords:} Bogdanov--Takens bifurcation, normal-form
	coefficients, nilpotent singularity, non-isolated equilibrium, mixed
	volume, Newton polytope, predator--prey systems.
	
	\medskip
	\noindent\textbf{2020 Mathematics Subject Classification:}
	34C23, 37G10, 37G05, 14M25, 92D25.

	\section{Introduction}\label{sec:intro}
	
	The Bogdanov--Takens bifurcation \cite{Bogdanov1975,Takens1974} is the
	generic codimension-two local bifurcation of a planar equilibrium with
	a nilpotent linear part.  Its normal form on the center manifold is
	\begin{equation}\label{eq:BT}
		\dot u = v,\qquad
		\dot v = a\,u^{2}+b\,u\,v+o(|(u,v)|^{2}),
	\end{equation}
	and the bifurcation is nondegenerate when $ab\neq0$
	\cite{Kuznetsov2004}.  The two ways of losing nondegeneracy are not
	equivalent.  In the classification of nilpotent singularities
	\cite{Dumortier1977,DLA2006}, writing the germ in Takens form
	$\dot u=v$, $\dot v=f(u)+v\,g(u)$ with $m=\operatorname{ord}f$ and
	$n=\operatorname{ord}g$, the BT point is $(m,n)=(2,1)$.  The degeneracy
	$b=0$ with $a\neq0$ gives $(m,n)=(2,2)$, a singularity that remains
	topologically a cusp and whose generic three-parameter unfolding is the
	object of \cite{DRS1987}.  The degeneracy $a=0$ with $b\neq0$ gives
	$m\geq3$, a singularity of saddle, focus or elliptic type, unfolded in
	\cite{DRSZ1991}.
	
	This paper has two parts.
	
	The first is an identity.  It seems not to have been recorded, and it
	is the source of everything else.
	
	\medskip\noindent
	\textbf{Theorem A (Section~\ref{sec:intrinsic}).}
	\emph{Let $X$ be a planar vector field of class $C^{2}$ with $X(p)=0$
		and $J=DX(p)$ nilpotent of rank one, let $q_{0}$ span $\ker J$, and let
		$a,b$ be the coefficients of \eqref{eq:BT} computed from a basis
		adapted to $q_{0}$.  Then}
	\begin{equation}\label{eq:ThmA}
		a=-\tfrac12\,\ip{\nabla\!\dt DX(p)}{q_{0}},
		\qquad
		b=\ip{\nabla\!\tr DX(p)}{q_{0}} .
	\end{equation}
	
	The two invariants of a $2\times2$ matrix are its trace and its
	determinant; the two BT coefficients are their derivatives along the
	kernel of the linearization.  Formula~\eqref{eq:ThmA} involves no
	generalized eigenvectors, no biorthonormalization and no second-order
	multilinear form; it is invariant under changes of phase-space
	coordinates (Lemma~\ref{lem:invRHS}) and equivariant under the
	rescaling of $q_{0}$, which is exactly the residual freedom left by the
	normal-form construction (Lemma~\ref{lem:scaling}).
	
	Theorem~A converts the analytic nondegeneracy conditions into
	statements about a line field.  Let $X_{\lambda}$ be a family,
	$E=\{(x,\lambda): X(x,\lambda)=0\}$ the equilibrium manifold,
	$\pi\colon E\to\R^{r}$ the projection onto parameter space,
	$\Sigma=\{\dt D_{x}X=0\}\cap E$ the fold set and
	$\Hh=\{\tr D_{x}X=0\}\cap E$ the neutrality set.  Along $\Sigma$ the
	kernel of $d\pi$ restricted to $TE$ is a line field $\ell$.
	
	\medskip\noindent
	\textbf{Theorem B (Section~\ref{sec:geometry}).}
	\emph{At a BT point, $a=0$ if and only if $\ell\subset T\Sigma$, and
		$b=0$ if and only if $\ell\subset T\Hh$; the latter holds if and only
		if $\pi|_{\Hh}$ fails to be an immersion.}
	
	We stress in Remark~\ref{rem:nottransversality} that $b=0$ is
	\emph{not} a failure of transversality between $\Sigma$ and $\Hh$
	inside $E$.
	
	The second part is an obstruction.  Theorem~A has an immediate
	consequence: if $X=h\,Z$ with $h(p)=0$, then $\dt DX$ vanishes
	identically on $\{h=0\}$, whose tangent at $p$ is $\ker J$, and
	therefore $a=0$ (Proposition~\ref{prop:product}).  Non-isolated
	equilibria always produce $a=0$.  The content of the second part is a
	converse for a wide class of systems, and the mechanism is
	combinatorial.
	
	\medskip\noindent
	\textbf{Theorem C (Section~\ref{sec:mult}).}
	\emph{Let $\dot x=x\,A/g_{1}$, $\dot y=y\,B/g_{2}$ with $A,B,g_{1},
		g_{2}$ Laurent polynomials, and let $p\in(\C^{*})^{2}$ be an
		equilibrium with $g_{1}(p)g_{2}(p)\neq0$ and $DX(p)$ nilpotent and
		nonzero.  If $A$ and $B$ have a common nonconstant factor vanishing at
		$p$, then $a=0$ and $p$ is not isolated.  Otherwise $p$ is isolated and}
	\[
	\operatorname{ord}f
	=\dim_{\C}\C[[x,y]]/(X_{1},X_{2})
	\le \MV(\Newt A,\Newt B).
	\]
	\emph{In particular, if $\MV(\Newt A,\Newt B)\le2$ then $a\neq0$ and the
		singularity is topologically a cusp, whatever its codimension; the
		nilpotent saddle, focus and elliptic strata of \cite{DRSZ1991} are
		unreachable.}
	
	The proof is short: the multiplicity of a nilpotent zero equals
	$\operatorname{ord}f$, which is $2$ exactly when $a\neq0$
	(Lemma~\ref{lem:mult}); and the multiplicity of an isolated zero in the
	torus is bounded by the mixed volume of the Newton polytopes
	(Lemma~\ref{lem:BKK}), a consequence of the theorem of Bernstein and
	Kushnirenko \cite{Bernstein1975,Kushnirenko1976} together with the
	conservation of multiplicity under small perturbations.  We emphasize that the bound counts only
	solutions in the torus, so that solutions on the axes and at infinity
	are irrelevant; this is what makes the argument short.
	
	Section~\ref{sec:family} applies Theorem~C to the class
	\[
	\dot x=x\,P(x,y),\qquad \dot y=y\,Q(x,y),
	\]
	with $P$ and $Q$ affine-bilinear, that is, with cubic terms restricted
	to the cross-products $x^{2}y$ and $xy^{2}$.  Here $\Newt P=\Newt Q$ is
	the unit square and $\MV=2$, so that $a=0$ occurs exactly at the
	parameter values for which the equilibrium ceases to be isolated; these
	are identified explicitly.  Section~\ref{sec:models} treats Bazykin's
	models with predator satiation and with mutual interference
	\cite{Bazykin1998}.  Both realize the degeneracy $b=0$, $a\neq0$, and
	only that one.

	\section{The BT coefficients as directional derivatives}
	\label{sec:intrinsic}
	
	\subsection{Setting}
	
	Let $X$ be a $C^{2}$ vector field on an open subset of $\R^{2}$ with
	$X(p)=0$ and $J:=DX(p)$ nilpotent with $\rank J=1$.  Write
	$B(u,v):=D^{2}X(p)[u,v]$.
	
	\begin{definition}\label{def:kbasis}
		A \emph{Kuznetsov basis adapted to $q_{0}$} is a quadruple
		$(q_{0},q_{1},p_{0},p_{1})$ of vectors of $\R^{2}$ with
		\begin{equation}\label{eq:kbasis}
			Jq_{0}=0,\quad Jq_{1}=q_{0},\quad
			J^{\top}p_{1}=0,\quad J^{\top}p_{0}=p_{1},
		\end{equation}
		\begin{equation}\label{eq:biorth}
			\ip{q_{0}}{p_{0}}=\ip{q_{1}}{p_{1}}=1,\qquad
			\ip{q_{0}}{p_{1}}=\ip{q_{1}}{p_{0}}=0 .
		\end{equation}
		The associated BT coefficients are
		\begin{equation}\label{eq:abdef}
			a=\tfrac12\ip{p_{1}}{B(q_{0},q_{0})},\qquad
			b=\ip{p_{0}}{B(q_{0},q_{0})}+\ip{p_{1}}{B(q_{0},q_{1})} .
		\end{equation}
	\end{definition}
	
	These are the quantities computed in \cite{Kuznetsov2004,Kuznetsov2005}.
	
	\begin{lemma}[Existence]\label{lem:exists}
		If $J$ is nilpotent with $\rank J=1$ and $q_{0}$ spans $\ker J$, a
		Kuznetsov basis adapted to $q_{0}$ exists, and $(p_{0},p_{1})$ is
		exactly the basis dual to $(q_{0},q_{1})$.
	\end{lemma}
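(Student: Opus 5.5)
The plan is to build the basis by hand from the Jordan structure of $J$ and then check that the conditions on $(p_0,p_1)$ say exactly that they form the dual basis.

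First, the $q$-vectors. Since $J$ is $2\times2$, nilpotent and of rank one, we have $J^{2}=0$ and $\ker J=\operatorname{im}J$, because both are lines and $J^{2}=0$ forces $\operatorname{im}J\subset\ker J$. As $q_{0}$ spans $\ker J=\operatorname{im}J$, there is a vector $q_{1}$ with $Jq_{1}=q_{0}$. This $q_{1}$ is not in $\ker J$, since otherwise $q_{0}=0$. Hence $(q_{0},q_{1})$ is a basis of $\R^{2}$.

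Next, the dual basis. Let $(p_{0},p_{1})$ be the basis dual to $(q_{0},q_{1})$. It satisfies \eqref{eq:biorth} by definition. It remains to check the adjoint relations $J^{\top}p_{1}=0$ and $J^{\top}p_{0}=p_{1}$. Since $(q_0,q_1)$ is a basis, it suffices to pair each side with $q_{0}$ and $q_{1}$, using $\ip{J^{\top}p}{q}=\ip{p}{Jq}$:
\[
\ip{J^{\top}p_{1}}{q_{0}}=\ip{p_{1}}{Jq_{0}}=0,\qquad
\ip{J^{\top}p_{1}}{q_{1}}=\ip{p_{1}}{q_{0}}=0,
\]
so $J^{\top}p_{1}=0$. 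Similarly
\[
\ip{J^{\top}p_{0}}{q_{0}}=0=\ip{p_{1}}{q_{0}},\qquad
\ip{J^{\top}p_{0}}{q_{1}}=\ip{p_{0}}{q_{0}}=1=\ip{p_{1}}{q_{1}},
\]
so $J^{\top}p_{0}=p_{1}$. This proves existence.

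Finally, the phrase ``exactly the basis dual''. Take any Kuznetsov basis adapted to $q_{0}$. The relations \eqref{eq:kbasis} force $Jq_1=q_0$, so $q_1\notin\ker J$ and $(q_{0},q_{1})$ is again a basis. The four identities in \eqref{eq:biorth} are then precisely the defining conditions of the dual basis, which is unique. So in every Kuznetsov basis, $(p_{0},p_{1})$ is the dual basis of $(q_{0},q_{1})$.

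Note that $q_{1}$ itself is determined only up to adding a multiple of $q_{0}$, and $p_0,p_1$ change with it. The statement claims only duality relative to the chosen $q_1$, so this freedom causes no difficulty.

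The only step needing care is the identity $\ker J=\operatorname{im}J$, which is what allows $q_1$ to be solved for. It follows from $J^{2}=0$ (Cayley–Hamilton with trace and determinant both zero) together with the dimension count. Everything else is routine verification.
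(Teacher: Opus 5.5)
Your proposal is correct and follows essentially the same route as the paper. You use $J^{2}=0$ and a dimension count to get $\operatorname{Im}J=\ker J$, solve $Jq_{1}=q_{0}$, take the dual basis and check the adjoint relations by pairing with $q_{0}$ and $q_{1}$, then read \eqref{eq:biorth} as the defining conditions of the dual basis. Your explicit remark that $(q_{0},q_{1})$ is a basis in any Kuznetsov quadruple makes the converse slightly more complete than the paper's one-line version.
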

	
	\begin{proof}
		From $J^{2}=0$ we get $\operatorname{Im}J\subset\ker J$, and both are
		one-dimensional, so $q_{0}\in\operatorname{Im}J$ and some $q_{1}$ with
		$Jq_{1}=q_{0}$ exists; $q_{1}\notin\ker J$, so $(q_{0},q_{1})$ is a
		basis of $\R^{2}$.  Let $(p_{0},p_{1})$ be the dual basis, which is
		\eqref{eq:biorth}.  Then
		$\ip{q_{0}}{J^{\top}p_{1}}=\ip{Jq_{0}}{p_{1}}=0$ and
		$\ip{q_{1}}{J^{\top}p_{1}}=\ip{q_{0}}{p_{1}}=0$ give $J^{\top}p_{1}=0$;
		and $\ip{q_{0}}{J^{\top}p_{0}}=0=\ip{q_{0}}{p_{1}}$ together with
		$\ip{q_{1}}{J^{\top}p_{0}}=\ip{q_{0}}{p_{0}}=1=\ip{q_{1}}{p_{1}}$ give
		$J^{\top}p_{0}=p_{1}$.  Conversely, \eqref{eq:biorth} says precisely
		that $(p_{0},p_{1})$ is dual to $(q_{0},q_{1})$.
	\end{proof}
	
	\begin{lemma}[Residual freedom]\label{lem:scaling}
		Given $q_{0}\neq0$ in $\ker J$, the quadruple satisfying
		\eqref{eq:kbasis}--\eqref{eq:biorth} is determined up to
		$(q_{0},q_{1},p_{0},p_{1})\mapsto(q_{0},q_{1}+dq_{0},p_{0}-dp_{1},p_{1})$,
		$d\in\R$, and the coefficients \eqref{eq:abdef} do not depend on $d$.
		Replacing $q_{0}$ by $cq_{0}$, $c\neq0$, forces
		$(q_{1},p_{0},p_{1})\mapsto(cq_{1},c^{-1}p_{0},c^{-1}p_{1})$ and
		produces $(a,b)\mapsto(ca,cb)$.
	\end{lemma}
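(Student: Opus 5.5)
The plan is to fix $q_{0}$ and describe every quadruple satisfying \eqref{eq:kbasis}--\eqref{eq:biorth} through Lemma~\ref{lem:exists}, which says that $(p_{0},p_{1})$ is the basis dual to $(q_{0},q_{1})$. Only $q_{1}$ is then free. Since $Jq_{1}=q_{0}$ and $\ker J=\R q_{0}$, any other solution has the form $q_{1}'=q_{1}+dq_{0}$. So I would compute the dual basis of $(q_{0},q_{1}+dq_{0})$ directly, and guess it to be $(p_{0}-dp_{1},\,p_{1})$. The check is a matter of pairings:
\[
\ip{q_{0}}{p_{0}-dp_{1}}=1,\quad \ip{q_{1}+dq_{0}}{p_{0}-dp_{1}}=0-d+d=0,
\]
\[
\ip{q_{0}}{p_{1}}=0,\quad \ip{q_{1}+dq_{0}}{p_{1}}=1 .
\]
Uniqueness of the dual basis then gives the first claim.

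Next I would substitute into \eqref{eq:abdef}. Because $p_{1}$ is unchanged, $a$ is clearly unchanged. For $b$, bilinearity and symmetry of $B$ give
\[
b'=\ip{p_{0}-dp_{1}}{B(q_{0},q_{0})}+\ip{p_{1}}{B(q_{0},q_{1})}+d\,\ip{p_{1}}{B(q_{0},q_{0})} .
\]
The two $d$-terms cancel, so $b'=b$. The only point needing care is that the $d$-contributions from the two summands of $b$ cancel exactly; this is the step I would double-check, and it relies on $B(q_{0},\cdot)$ being linear.

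For the rescaling, replace $q_{0}$ by $cq_{0}$. Then $Jq_{1}'=cq_{0}$ is solved by $q_{1}'=cq_{1}$, and up to the $d$-freedom already handled this is every solution. The dual basis of $(cq_{0},cq_{1})$ is $(c^{-1}p_{0},c^{-1}p_{1})$. Substituting into \eqref{eq:abdef}:
\[
a'=\tfrac12\ip{c^{-1}p_{1}}{c^{2}B(q_{0},q_{0})}=ca,
\]
and each term of $b$ scales by $c^{-1}\cdot c^{2}=c$, so $b'=cb$.

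The phrase ``forces'' should be read modulo the $d$-freedom, which leaves $a$ and $b$ invariant. I expect no real obstacle here; the proof is a short linear-algebra verification.
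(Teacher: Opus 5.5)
Your proof is correct and follows the paper's route. The two $d$-contributions $\mp d\ip{p_{1}}{B(q_{0},q_{0})}$ cancel in $b$, and homogeneity of degree $2$ in the $q$'s and $-1$ in the $p$'s gives $(a,b)\mapsto(ca,cb)$; you also make explicit what the paper leaves as ``direct verification'' (every admissible quadruple arises this way because $\ker J=\R q_{0}$ and dual bases are unique), and your reading of ``forces'' as holding modulo the $d$-freedom is the accurate one.
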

	
	\begin{proof}
		Both statements are direct verifications from
		\eqref{eq:kbasis}--\eqref{eq:abdef}.  For the first, substituting
		$q_{1}+dq_{0}$ and $p_{0}-dp_{1}$ in \eqref{eq:abdef} adds
		$-d\ip{p_{1}}{B(q_{0},q_{0})}+d\ip{p_{1}}{B(q_{0},q_{0})}=0$ to $b$ and
		leaves $a$ unchanged.  For the second, $a$ and $b$ are homogeneous of
		degree $2$ in the $q$'s and $-1$ in the $p$'s.
	\end{proof}
	
	Thus $a$ and $b$ are attached to the pair $(X,q_{0})$, and only their
	vanishing, together with the sign of $ab$, is independent of the choice
	of $q_{0}$.  Formula~\eqref{eq:ThmA} is homogeneous of degree one in
	$q_{0}$, hence compatible with Lemma~\ref{lem:scaling}.
	
	\subsection{Invariance}
	
	Let $\varphi$ be a $C^{2}$ diffeomorphism with $\varphi(z_{0})=p$, and
	let $Y:=(D\varphi)^{-1}\,X\circ\varphi$.  Put $Q:=D\varphi(z_{0})$ and
	$C:=D^{2}\varphi(z_{0})$.  Vectors and covectors are transported by
	\begin{equation}\label{eq:transport}
		q\longmapsto q^{z}=Q^{-1}q,\qquad
		\rho\longmapsto \rho^{z}=Q^{\top}\rho ,
	\end{equation}
	which preserves the pairing.
	
	\begin{lemma}[Covariance of the normal-form data]\label{lem:invLHS}
		With the notation above, $J_{Y}=Q^{-1}JQ$,
		\begin{equation}\label{eq:Btransf}
			B_{Y}(u,v)=Q^{-1}\bigl[\,J\,C(u,v)+B(Qu,Qv)
			-C(u,J_{Y}v)-C(v,J_{Y}u)\,\bigr],
		\end{equation}
		the transported quadruple \eqref{eq:transport} is a Kuznetsov basis for
		$Y$ adapted to $q_{0}^{z}$, and $a_{Y}=a$, $b_{Y}=b$.
	\end{lemma}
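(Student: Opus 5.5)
The plan is to differentiate $X\circ\varphi=D\varphi\,Y$ twice at $z_{0}$, deduce the transformation rule for the Jacobian and for the second-derivative form, then check that the transported quadruple is again a Kuznetsov basis and substitute into \eqref{eq:abdef}.

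\emph{Step 1: Jacobian and Hessian.} Differentiate the identity $X(\varphi(z))=D\varphi(z)\,Y(z)$ once in direction $u$:
\[
DX(\varphi)D\varphi\,u=D^{2}\varphi[u,Y]+D\varphi\,DY\,u .
\]
At $z_{0}$ we have $Y(z_{0})=0$, which gives $JQ=QJ_{Y}$, that is, $J_{Y}=Q^{-1}JQ$. Differentiating once more in direction $v$ and evaluating at $z_{0}$, the terms containing $Y(z_{0})$ or third derivatives of $\varphi$ paired with $Y$ drop out. What remains is
\[
B(Qu,Qv)+J\,C(u,v)=C(u,J_{Y}v)+C(v,J_{Y}u)+Q\,B_{Y}(u,v),
\]
and solving for $B_{Y}$ yields \eqref{eq:Btransf}. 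Only $C^{2}$ regularity of $\varphi$ is needed at this point: only $D^{2}\varphi$ appears, and $D^{2}Y$ exists because $Y=(D\varphi)^{-1}X\circ\varphi$, though this point requires some care. If necessary I would instead work with the second-order Taylor expansion.

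\emph{Step 2: transported basis.} Since conjugation by $Q$ intertwines $J$ and $J_{Y}$, we get $J_{Y}q_{0}^{z}=Q^{-1}Jq_{0}=0$ and $J_{Y}q_{1}^{z}=Q^{-1}q_{0}=q_{0}^{z}$. Likewise $J_{Y}^{\top}p^{z}=Q^{\top}J^{\top}Q^{-\top}Q^{\top}p=Q^{\top}J^{\top}p$, which gives the remaining two relations in \eqref{eq:kbasis}. The pairings in \eqref{eq:biorth} are preserved because the transport preserves the pairing.

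\emph{Step 3: coefficients.} Pair $B_{Y}$ with the transported covectors, using $\ip{Q^{\top}p}{Q^{-1}w}=\ip{p}{w}$, so that each term of $B_{Y}$ becomes a term of the form $\ip{p}{[\cdots]}$.

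For $a_{Y}$, we pair with $p_{1}$ at $(q_{0}^{z},q_{0}^{z})$. The correction terms all vanish: $\ip{p_{1}}{JC}=\ip{J^{\top}p_{1}}{C}=0$, and $J_{Y}q_{0}^{z}=0$ kills both $C$-terms. Hence $a_{Y}=a$.

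For $b_{Y}$, the corrections in the first pairing $\ip{p_{0}}{B_{Y}(q_{0}^{z},q_{0}^{z})}$ reduce to $\ip{J^{\top}p_{0}}{C(q_{0}^{z},q_{0}^{z})}=\ip{p_{1}}{C(q_{0}^{z},q_{0}^{z})}$. In the second pairing $\ip{p_{1}}{B_{Y}(q_{0}^{z},q_{1}^{z})}$, the $JC$ term dies, $C(q_{1}^{z},J_{Y}q_{0}^{z})=0$, and $-C(q_{0}^{z},J_{Y}q_{1}^{z})=-C(q_{0}^{z},q_{0}^{z})$ contributes $-\ip{p_{1}}{C(q_{0}^{z},q_{0}^{z})}$. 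The two contributions cancel, so $b_{Y}=b$.

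The main obstacle is the sign and bookkeeping in Step 1: the two symmetric $C$-terms and which side of $Q$ they sit on. This determines the exact cancellation in $b$. If the computation gives a different arrangement of $C$-terms, I would recheck by specializing to a quadratic $\varphi$ with $X$ linear.
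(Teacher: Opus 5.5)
Your proposal is correct and follows the paper's proof. Differentiating $X\circ\varphi=D\varphi\,Y$ twice at $z_{0}$ is the same computation as the paper's second-order expansion of $(D\varphi)^{-1}X\circ\varphi$, your Steps 2 and 3 (including the cancellation of $\ip{p_{1}}{C(q_{0}^{z},q_{0}^{z})}$ in $b_{Y}$) match it term by term, and the $D^{3}\varphi$ concern is harmless. That term only comes from differentiating $D^{2}\varphi(z)[u,Y(z)]$ at the point where $Y(z_{0})=0$, so continuity of $D^{2}\varphi$ suffices, and the same identity shows that $DY$ is differentiable at $z_{0}$ under the $C^{2}$ hypothesis alone.
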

	
	\begin{proof}
		Expanding $\varphi(z_{0}+\zeta)=p+Q\zeta+\tfrac12C(\zeta,\zeta)+o(|\zeta|^{2})$
		and
		$(D\varphi)^{-1}=Q^{-1}-Q^{-1}C(\zeta,\cdot)Q^{-1}+o(|\zeta|)$ in
		$Y(z_{0}+\zeta)=(D\varphi)^{-1}X(\varphi(z_{0}+\zeta))$ and collecting
		terms of order one and two gives $J_{Y}=Q^{-1}JQ$ and
		\eqref{eq:Btransf} after polarization.  The relations
		\eqref{eq:kbasis}--\eqref{eq:biorth} for the transported quadruple are
		immediate from $J_{Y}=Q^{-1}JQ$ and the invariance of the pairing.
		
		For the coefficients, use $J_{Y}q_{0}^{z}=0$, $J_{Y}q_{1}^{z}=q_{0}^{z}$,
		$\ip{Q^{\top}\rho}{Q^{-1}w}=\ip{\rho}{w}$,
		$\ip{p_{1}}{Jw}=\ip{J^{\top}p_{1}}{w}=0$ and
		$\ip{p_{0}}{Jw}=\ip{p_{1}}{w}$.  From \eqref{eq:Btransf},
		\[
		a_{Y}=\tfrac12\ip{p_{1}}{J\,C(q_{0}^{z},q_{0}^{z})}
		+\tfrac12\ip{p_{1}}{B(q_{0},q_{0})}=a ,
		\]
		and, writing $\kappa:=C(q_{0}^{z},q_{0}^{z})$,
		\[
		b_{Y}=\bigl[\ip{p_{1}}{\kappa}+\ip{p_{0}}{B(q_{0},q_{0})}\bigr]
		+\bigl[\ip{p_{1}}{B(q_{0},q_{1})}-\ip{p_{1}}{\kappa}\bigr]=b . \qedhere
		\]
	\end{proof}
	
	\begin{lemma}[Invariance of the right-hand side]\label{lem:invRHS}
		With the notation above,
		\[
		\ip{\nabla\!\tr DY(z_{0})}{q_{0}^{z}}=\ip{\nabla\!\tr DX(p)}{q_{0}},
		\qquad
		\ip{\nabla\!\dt DY(z_{0})}{q_{0}^{z}}=\ip{\nabla\!\dt DX(p)}{q_{0}} .
		\]
	\end{lemma}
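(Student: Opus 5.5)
The plan is to avoid expanding $B_{Y}$ via \eqref{eq:Btransf} altogether. Instead I would compare $DY$ and $DX\circ\varphi$ pointwise along the curve $t\mapsto z_{0}+tq_{0}^{z}$, and use the fact that trace and determinant are conjugation invariants. Write $M(z):=D\varphi(z)$, so that $M(z_{0})=Q$. Differentiating $Y=M^{-1}\,X\circ\varphi$ gives, for every $w$,
\[
DY(z)w=M(z)^{-1}\,DX(\varphi(z))\,M(z)\,w+\bigl[D(M^{-1})(z)w\bigr]\,X(\varphi(z)).
\]
The first term has the same trace and determinant as $DX(\varphi(z))$. Call the second term $R(z)w$.

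First I would show that $R(z_{0}+tq_{0}^{z})=o(t)$. The coefficient $D(M^{-1})$ is continuous, hence bounded near $z_{0}$, because $\varphi\in C^{2}$. The factor $X(\varphi(z_{0}+tq_{0}^{z}))$ vanishes at $t=0$, and its $t$-derivative at $0$ is $J\,Q\,q_{0}^{z}=Jq_{0}=0$. This uses $X\circ\varphi\in C^{1}$ (indeed $C^{2}$), $X(p)=0$ and $q_{0}\in\ker J$. So this factor is $o(t)$, and therefore so is $R$.

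Next, set $A(t):=M^{-1}DX(\varphi)M$ evaluated at $z_{0}+tq_{0}^{z}$. Then $DY(z_{0}+tq_{0}^{z})=A(t)+o(t)$. Since $\tr$ is linear and $\dt$ is a quadratic polynomial in the entries, with $A(t)$ bounded, we get
\[
\tr DY(z_{0}+tq_{0}^{z})=\tr DX(\varphi(z_{0}+tq_{0}^{z}))+o(t),
\]
and the same relation for $\dt$. Both functions $\tr DX$ and $\dt DX$ are $C^{1}$ because $X\in C^{2}$. Differentiating at $t=0$ by the chain rule, with $\frac{d}{dt}\varphi(z_{0}+tq_{0}^{z})|_{t=0}=Qq_{0}^{z}=q_{0}$, gives
\[
\ip{\nabla\!\tr DY(z_{0})}{q_{0}^{z}}=\ip{\nabla\!\tr DX(p)}{q_{0}},
\]
and likewise for $\dt$.

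The one delicate point is the $o(t)$ estimate for $R$. It is exactly where the hypothesis $q_{0}\in\ker J$ enters. Along any other direction, $R$ contributes a term linear in $t$, the analogue of the $C$-terms in \eqref{eq:Btransf}, so the invariance is genuinely special to the kernel direction. The remaining steps are routine regularity bookkeeping under the $C^{2}$ hypotheses.
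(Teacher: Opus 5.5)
Your proof is correct and follows the paper's argument. You use the same splitting $DY=(D\varphi)^{-1}(DX\circ\varphi)\,D\varphi+R$, the conjugation invariance of $\tr$ and $\dt$ for the first piece, and the first-order vanishing of $R$ along $q_{0}^{z}$, which comes from $X(p)=0$ and $Jq_{0}=0$. The only difference is that you obtain this vanishing as an $o(t)$ estimate along the ray, whereas the paper differentiates $R$, a step that formally involves $D^{2}(D\varphi^{-1})$. Your version therefore uses only the stated $C^{2}$ regularity of $\varphi$, and it also shows that the directional derivatives of $\tr DY$ and $\dt DY$ at $z_{0}$ exist.
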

	
	\begin{proof}
		Write $DY(z)=M(z)+R(z)$ with
		$M(z)=D\varphi(z)^{-1}DX(\varphi(z))D\varphi(z)$ and
		$R(z)\,\zeta=\bigl[D_{\zeta}(D\varphi^{-1})\bigr]X(\varphi(z))$.  Then
		$\tr M(z)=\tr DX(\varphi(z))$ and $\dt M(z)=\dt DX(\varphi(z))$, and
		$R(z_{0})=0$ because $X(p)=0$.  Differentiating $R$ at $z_{0}$ along
		$q_{0}^{z}$,
		\[
		\bigl[dR(q_{0}^{z})\bigr]\zeta
		=\bigl[D^{2}(D\varphi^{-1})(q_{0}^{z},\zeta)\bigr]X(p)
		+\bigl[D_{\zeta}(D\varphi^{-1})\bigr]\bigl(DX(p)\,Q\,q_{0}^{z}\bigr)=0 ,
		\]
		since $X(p)=0$ and $Qq_{0}^{z}=q_{0}\in\ker DX(p)$.  Hence
		$d(\tr R)(q_{0}^{z})=0$, and, using
		$\dt(M+R)=\dt M+\tr(\operatorname{adj}(M)R)+\dt R$ together with
		$R(z_{0})=0$ and $dR(q_{0}^{z})=0$,
		$d(\dt DY)(q_{0}^{z})=d(\dt M)(q_{0}^{z})$.  The chain rule finishes
		the proof.
	\end{proof}
	
	The hypothesis $q_{0}\in\ker DX(p)$ is essential: neither $\tr DX$ nor
	$\dt DX$ is invariant at points that are not equilibria, and it is
	precisely the vanishing of $DX(p)q_{0}$ that kills the corrections.
	
	\subsection{Proof of Theorem A}
	
	\begin{theorem}[Theorem A]\label{thm:A}
		Let $X$ be of class $C^{2}$ with $X(p)=0$ and $J=DX(p)$ nilpotent of
		rank one, let $q_{0}$ span $\ker J$ and let $a,b$ be as in
		Definition~\ref{def:kbasis}.  Then \eqref{eq:ThmA} holds.
	\end{theorem}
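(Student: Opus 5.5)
The plan is to reduce to a normal linear frame and then compute both sides directly. Fix $q_{0}$ spanning $\ker J$, and by Lemma~\ref{lem:exists} pick $q_{1}$ with $Jq_{1}=q_{0}$. Let $(p_{0},p_{1})$ be the dual basis, so that $(q_{0},q_{1},p_{0},p_{1})$ is a Kuznetsov basis adapted to $q_{0}$. By Lemma~\ref{lem:scaling} the values of $a,b$ do not depend on the choice of $q_{1}$.

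Take the linear diffeomorphism $\varphi(z)=p+Qz$ with $Q=(q_{0}\,|\,q_{1})$, so $C=0$. Transport as in \eqref{eq:transport}: $q_{0}^{z}=e_{1}$, $q_{1}^{z}=e_{2}$, $p_{0}^{z}=e_{1}$, $p_{1}^{z}=e_{2}$. Lemma~\ref{lem:invLHS} gives $a_{Y}=a$ and $b_{Y}=b$. Lemma~\ref{lem:invRHS} shows that the right-hand sides of \eqref{eq:ThmA} are also unchanged. It therefore suffices to prove \eqref{eq:ThmA} for $Y$ at $z_{0}=0$, where
\[
J_{Y}=\begin{pmatrix}0&1\\0&0\end{pmatrix}
\]
and $q_{0}=e_{1}$.

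In these coordinates, write $Y=(Y_{1},Y_{2})$ and let subscripts on $\partial$ denote partial derivatives. At $0$ we have $\partial_{1}Y_{1}=\partial_{1}Y_{2}=\partial_{2}Y_{2}=0$ and $\partial_{2}Y_{1}=1$.

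For the left-hand sides, $B(e_{1},e_{1})=(\partial_{11}Y_{1},\partial_{11}Y_{2})$ and $B(e_{1},e_{2})=(\partial_{12}Y_{1},\partial_{12}Y_{2})$. Definition~\ref{def:kbasis} then gives
\[
a=\tfrac12\partial_{11}Y_{2},\qquad b=\partial_{11}Y_{1}+\partial_{12}Y_{2}.
\]

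For the right-hand sides, differentiate $\tr DY=\partial_{1}Y_{1}+\partial_{2}Y_{2}$ along $e_{1}$. This yields $\partial_{11}Y_{1}+\partial_{12}Y_{2}=b$.

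Differentiate $\dt DY=\partial_{1}Y_{1}\,\partial_{2}Y_{2}-\partial_{2}Y_{1}\,\partial_{1}Y_{2}$ along $e_{1}$ by the product rule, and evaluate at $0$ using the values of the first derivatives. Every term containing a vanishing first-order factor drops out. The only survivor is $-\partial_{2}Y_{1}\,\partial_{11}Y_{2}=-\partial_{11}Y_{2}=-2a$, which is the claimed formula for $a$.

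No step is a real obstacle, since the invariance lemmas already carry the coordinate-independence. The only care needed is in the bookkeeping of the determinant derivative. There one must note that it is the nonzero entry $\partial_{2}Y_{1}=1$ of the nilpotent Jacobian that pairs with $\partial_{11}Y_{2}$. This is also where the factor $-\tfrac12$ comes from, because $a$ carries a $\tfrac12$ in \eqref{eq:abdef}.
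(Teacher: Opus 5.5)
Your proof is correct and is essentially the paper's argument. Both pass to the adapted frame $(q_{0},q_{1})$, in which $J$ becomes $\left(\begin{smallmatrix}0&1\\0&0\end{smallmatrix}\right)$, get $b$ from the linearity of the trace, and get $-2a$ because the only surviving term in the derivative of the determinant is $\partial_{11}Y_{2}=\ip{p_{1}}{B(q_{0},q_{0})}$ paired with the nonzero entry of $J$; your product-rule computation is the paper's formula $d(\dt)_{J}(L)=\tr(\operatorname{adj}(J)L)$. The one difference is that you route the linear change of frame through Lemmas~\ref{lem:invLHS} and~\ref{lem:invRHS}, which is valid but unnecessary: for $\varphi(z)=p+Qz$ one has $DY(z)=Q^{-1}DX(p+Qz)Q$ exactly, and trace and determinant are similarity invariants, which is why the paper simply writes the matrices in the adapted basis.
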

	
	\begin{proof}
		Work in the adapted basis $(q_{0},q_{1})$ of
		Definition~\ref{def:kbasis}, with dual basis $(p_{0},p_{1})$, so that
		\[
		[J]_{(q_{0},q_{1})}=\begin{pmatrix}0&1\\ 0&0\end{pmatrix}.
		\]
		Let $L:=D(DX)_{p}[q_{0}]$ be the derivative of the matrix-valued map
		$z\mapsto DX(z)$ at $p$ in the direction $q_{0}$; by definition of the
		second-order form, $L(w)=B(q_{0},w)$.  In the same basis,
		\[
		[L]=\begin{pmatrix}
			\ip{p_{0}}{B(q_{0},q_{0})} & \ip{p_{0}}{B(q_{0},q_{1})}\\[2pt]
			\ip{p_{1}}{B(q_{0},q_{0})} & \ip{p_{1}}{B(q_{0},q_{1})}
		\end{pmatrix}.
		\]
		
		Since the trace is linear,
		\[
		d(\tr DX)_{p}(q_{0})=\tr L
		=\ip{p_{0}}{B(q_{0},q_{0})}+\ip{p_{1}}{B(q_{0},q_{1})}=b,
		\]
		which is \eqref{eq:abdef}.  For the determinant, differentiating at the
		matrix $J$ gives $d(\dt)_{J}(L)=\tr\bigl(\operatorname{adj}(J)\,L\bigr)$,
		and
		$\operatorname{adj}\left(\begin{smallmatrix}0&1\\0&0\end{smallmatrix}\right)
		=\left(\begin{smallmatrix}0&-1\\0&0\end{smallmatrix}\right)$, so only
		the $(2,1)$ entry of $[L]$ survives:
		\[
		d(\dt DX)_{p}(q_{0})=-\ip{p_{1}}{B(q_{0},q_{0})}=-2a .
		\]
		Both identities are \eqref{eq:ThmA}.
	\end{proof}
	
	\begin{remark}
		The computation uses only the $2$-jet of $X$ at $p$ and never invokes a
		normal form, which is why $C^{2}$ regularity suffices and why no
		remainder term appears.  Lemmas~\ref{lem:scaling},
		\ref{lem:invLHS} and~\ref{lem:invRHS} are not needed for the proof;
		they are retained because they give the separate covariance
		interpretation of the two sides, which is what
		Section~\ref{sec:geometry} uses.
	\end{remark}
	
	\begin{remark}\label{rem:regularity}
		Both sides of \eqref{eq:ThmA} depend only on the $2$-jet of $X$ at $p$,
		so $C^{2}$ regularity suffices.  For a merely $C^{2}$ field Taylor's
		theorem gives a remainder $o(|(u,v)|^{2})$ and not
		$\mathcal O(|(u,v)|^{3})$, which is why \eqref{eq:BT} is written with
		the former; class $C^{3}$ is needed only to interpret $a$ and $b$ as
		the coefficients of a germ smoothly equivalent to \eqref{eq:BT} up to
		order three, and it is then legitimate to write
		$\mathcal O(|(u,v)|^{3})$.  If one prefers a normalizing map, it may be
		taken to be a polynomial of degree two: a linear map sending $q_{0}$ to
		$(1,0)^{\top}$ and putting $J$ in Jordan form, followed by a
		near-identity quadratic change removing all quadratic terms except
		$u^{2}$ and $uv$ in the second component.
	\end{remark}
	
	\subsection{Two consequences}
	
	\begin{corollary}[Orbital equivalence]\label{cor:orbital}
		Let $h$ be a positive $C^{2}$ function and $\widetilde X=hX$.  Then
		$\widetilde X$ has the same equilibrium $p$ with the same kernel
		direction, and
		\[
		\widetilde a=h(p)^{2}\,a,\qquad \widetilde b=h(p)\,b .
		\]
		In particular $\{a=0\}$, $\{b=0\}$ and $\operatorname{sign}(ab)$ are
		invariants of the orbit foliation.
	\end{corollary}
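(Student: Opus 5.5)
The plan is to reduce everything to Theorem~A, evaluated on the product $\widetilde X=hX$.

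\textbf{Same equilibrium and kernel.} Since $X(p)=0$, the product rule gives
\[
D\widetilde X(p)=h(p)\,J+X(p)\,\nabla h(p)^{\top}=h(p)\,J.
\]
Because $h(p)>0$, the matrix $h(p)J$ is still nilpotent of rank one and has the same kernel. So $q_{0}$ spans $\ker D\widetilde X(p)$, and Theorem~A applies to $\widetilde X$ with the same $q_{0}$.

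\textbf{First-order expansion of the Jacobian.} For $z$ near $p$,
\[
D\widetilde X(z)=h(z)\,DX(z)+X(z)\,\nabla h(z)^{\top}=h(z)\,DX(z)+R(z),
\]
where $R(z)$ has rank at most one. Moreover $R(p)=0$, and the derivative of $R$ at $p$ along $q_{0}$ is
\[
(Jq_{0})\,\nabla h(p)^{\top}+X(p)\,(\cdots)=0,
\]
so $R$ vanishes to first order along $q_{0}$. This is the same cancellation as in Lemma~\ref{lem:invRHS}, and it uses $q_{0}\in\ker J$.

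\textbf{The trace.} Since the trace is linear,
\[
d(\tr D\widetilde X)_{p}(q_{0})=dh(q_{0})\,\tr J+h(p)\,d(\tr DX)_{p}(q_{0})+d(\tr R)_{p}(q_{0}).
\]
Here $\tr J=0$ and the last term is zero, so the trace derivative equals $h(p)\,b$. Theorem~A then gives $\widetilde b=h(p)\,b$.

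\textbf{The determinant.} On $2\times2$ matrices,
\[
\dt(hM+R)=h^{2}\dt M+h\,\tr(\operatorname{adj}(M)R)+\dt R.
\]
Differentiate at $p$ along $q_{0}$ and check each term.
\begin{itemize}[leftmargin=*]
\item The first term gives $2h\,dh(q_{0})\dt J+h^{2}\,d(\dt DX)(q_{0})$. Since $\dt J=0$, this reduces to $h(p)^{2}\,d(\dt DX)(q_{0})$.
\item The middle term is bilinear in $(M,R)$ with $R(p)=0$ and $dR(q_{0})=0$, so its derivative vanishes.
\item The term $\dt R$ is quadratic in $R$, so its derivative also vanishes.
\end{itemize}
Hence $d(\dt D\widetilde X)_{p}(q_{0})=h(p)^{2}\,d(\dt DX)_{p}(q_{0})$, and Theorem~A gives $\widetilde a=h(p)^{2}a$.

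\textbf{Orbital invariants.} The same $q_{0}$ serves for both fields, and $h(p)>0$. Therefore $\{a=0\}$, $\{b=0\}$ and $\operatorname{sign}(ab)$ are unchanged: the product $ab$ is multiplied by $h(p)^{3}>0$.

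Any orbitally equivalent field near $p$ is of the form $hX$ with $h>0$, possibly after a change of coordinates. By Lemma~\ref{lem:invLHS}, $a$ and $b$ are invariant under coordinate changes, so these quantities are invariants of the orbit foliation.

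The main point requiring care is the derivative of $R$ along $q_{0}$. This is where the hypothesis $Jq_{0}=0$ enters; without it the $\nabla h$ terms would contaminate both the trace and the determinant. Everything else is bookkeeping.
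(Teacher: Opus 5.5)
Your argument is correct and is the paper's proof written out in full: the paper likewise writes $D(hX)=h\,DX$ plus corrections proportional to $X$, kills those corrections along $q_{0}$ using $X(p)=0$ and $DX(p)q_{0}=0$, then uses $\tr J=\dt J=0$ and applies Theorem~A. Your explicit $R=X\nabla h^{\top}$ and the $2\times2$ expansion $\dt(hM+R)=h^{2}\dt M+h\tr(\operatorname{adj}(M)R)+\dt R$ simply make that step precise. Your closing claim that every orbitally equivalent field is of the form $hX$ after a change of coordinates is superfluous and unjustified. The paper does not need it, since ``orbit foliation'' there means invariance under positive multipliers, and proving it would require showing that $h$ extends to a positive $C^{2}$ function through the zero $p$.
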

	
	\begin{remark}\label{rem:orientation}
		The individual signs of $a$ and $b$ are \emph{not} invariants unless an
		orientation of the kernel line is fixed.  By Lemma~\ref{lem:scaling},
		$q_{0}\mapsto cq_{0}$ sends $(a,b)\mapsto(ca,cb)$, so for $c<0$ both
		signs reverse; only $\{a=0\}$, $\{b=0\}$ and $\operatorname{sign}(ab)$
		survive.  Once a generator $q_{0}$ has been chosen and transported, the
		individual signs are preserved by every positive orbital multiplier
		$h$, because $\widetilde a=h(p)^{2}a$ and $\widetilde b=h(p)b$ with
		$h(p)>0$.  Accordingly, whenever a numerical value or a sign of $a$ or
		$b$ is asserted below, the normalization of $q_{0}$ is stated with it;
		see \eqref{eq:q0} for the affine-bilinear class and
		Section~\ref{sec:models} for the two models.
	\end{remark}
	
	\begin{proof}
		At $p$, $D\widetilde X(p)=h(p)DX(p)$, so $\tr D\widetilde X=h\tr DX$ and
		$\dt D\widetilde X=h^{2}\dt DX$ hold at $p$; at nearby points these
		identities acquire corrections proportional to $X$.  Differentiating
		along $q_{0}$ and using $X(p)=0$, $\tr DX(p)=\dt DX(p)=0$ and
		$DX(p)q_{0}=0$ leaves
		$d(\tr D\widetilde X)(q_{0})=h(p)\,d(\tr DX)(q_{0})$ and
		$d(\dt D\widetilde X)(q_{0})=h(p)^{2}\,d(\dt DX)(q_{0})$.  Apply
		Theorem~\ref{thm:A}.
	\end{proof}
	
	Corollary~\ref{cor:orbital} is what licenses clearing a \emph{common}
	denominator in a rational model before computing $a$ and $b$; it is not
	apparent from \eqref{eq:abdef}.
	
	\begin{proposition}[Product structure]\label{prop:product}
		Let $h$ and $Z$ be of class $C^{2}$ near $p$ and let $X=h\,Z$ with
		\[
		h(p)=0,\qquad \nabla h(p)\neq0,\qquad Z(p)\neq0 .
		\]
		Then $X(p)=0$, the matrix $J:=DX(p)=Z(p)\,\nabla h(p)^{\top}$ has rank
		one, and $\ker J=T_{p}\{h=0\}$.  Moreover $J$ is nilpotent if and only
		if $\ip{\nabla h(p)}{Z(p)}=0$, and in that case, for $q_{0}$ spanning
		$T_{p}\{h=0\}$,
		\begin{equation}\label{eq:abproduct}
			a=0,
			\qquad
			b=\ip{Z(p)}{D^{2}h(p)\,q_{0}}
			+\ip{\nabla h(p)}{DZ(p)\,q_{0}} .
		\end{equation}
		The same conclusion $a=0$ holds, more generally, whenever a $C^{1}$
		curve of equilibria of $X$ passes through $p$ and $\rank DX(p)=1$.
	\end{proposition}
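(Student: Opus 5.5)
The plan is to compute everything from Theorem~\ref{thm:A}, so that $a$ and $b$ come from directional derivatives of $\tr DX$ and $\dt DX$ along $q_{0}$.

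\emph{Structure of $J$.} Differentiate $X=hZ$ to get $DX=Z\,\nabla h^{\top}+h\,DZ$. At $p$ this gives $X(p)=0$ and $J=Z(p)\nabla h(p)^{\top}$, which has rank one because $Z(p)\neq0$ and $\nabla h(p)\neq0$. Its kernel is $\nabla h(p)^{\perp}=T_{p}\{h=0\}$. A rank-one matrix $uv^{\top}$ satisfies $(uv^{\top})^{2}=\ip{v}{u}\,uv^{\top}$, so $J$ is nilpotent if and only if $\ip{\nabla h(p)}{Z(p)}=0$.

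\emph{The coefficient $a$.} Away from $p$, on the curve $\{h=0\}$ (a $C^{2}$ curve by the implicit function theorem), we have $DX=Z\nabla h^{\top}$, which has rank at most one. Hence $\dt DX\equiv0$ along this curve. Its tangent at $p$ is $q_{0}$, so $\ip{\nabla\dt DX(p)}{q_{0}}=0$, and Theorem~\ref{thm:A} gives $a=0$.

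To be careful about differentiability: $\dt DX$ is $C^{1}$ only if $X$ is $C^{2}$, and here $X=hZ$ with $h,Z\in C^{2}$, so $X$ is $C^{2}$. The directional derivative along a $C^{1}$ curve tangent to $q_{0}$ then equals the derivative of the restricted function, which is zero.

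\emph{The coefficient $b$.} Compute $\tr DX=\ip{\nabla h}{Z}+h\,\tr DZ$. Differentiating along $q_{0}$ gives
\[
\ip{D^{2}h(p)q_{0}}{Z(p)}+\ip{\nabla h(p)}{DZ(p)q_{0}}+\ip{\nabla h(p)}{q_{0}}\tr DZ(p).
\]
The last term vanishes because $q_{0}\perp\nabla h(p)$. This is \eqref{eq:abproduct} via Theorem~\ref{thm:A}.

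\emph{General statement.} Let $\gamma$ be a $C^{1}$ curve of equilibria with $\gamma(0)=p$ and $\gamma'(0)\neq0$. This is the step needing most care: nonzero velocity must be assumed, or obtained after reparametrization if the curve is regular. Differentiating $X(\gamma(t))=0$ gives $DX(\gamma(t))\gamma'(t)=0$, so $DX$ is singular along $\gamma$ and $\dt DX(\gamma(t))\equiv0$. At $t=0$ this shows $\gamma'(0)\in\ker DX(p)$, which is one-dimensional by the rank hypothesis, so $\gamma'(0)$ is a multiple of $q_{0}$. Differentiating $\dt DX\circ\gamma$ at $0$ gives $\ip{\nabla\dt DX(p)}{\gamma'(0)}=0$, hence $\ip{\nabla\dt DX(p)}{q_{0}}=0$ and $a=0$ by Theorem~\ref{thm:A}.

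Here nilpotency is implicit in the definition of $a$. If $J$ has rank one but is not nilpotent, $a$ is not defined as in Definition~\ref{def:kbasis}, so I would read the statement under the standing nilpotency hypothesis. The main obstacle is only this bookkeeping of hypotheses: regularity of the curve and of $\dt DX$. The computations themselves are routine.
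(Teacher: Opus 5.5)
Your proposal is correct and follows the paper's proof essentially step by step. You compute $J=Z(p)\nabla h(p)^{\top}$, use the vanishing of $\dt DX$ on $\{h=0\}$ (respectively along the curve of equilibria) together with Theorem~\ref{thm:A} to get $a=0$, and differentiate $\tr DX=h\,\tr DZ+\ip{\nabla h}{Z}$ along $q_{0}$ to get $b$; your rank-one identity $(uv^{\top})^{2}=\ip{v}{u}\,uv^{\top}$ simply replaces the paper's trace/determinant argument for nilpotency. Three small points. In the $b$ computation you omitted the term $h(p)\,d(\tr DZ)_{p}(q_{0})$, which vanishes since $h(p)=0$. Your regularity assumption $\gamma'(0)\neq0$ and your observation that the final assertion tacitly requires $J$ nilpotent (otherwise $a$ is undefined) correctly make explicit what the paper leaves implicit.
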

	
	\begin{proof}
		From $DX=h\,DZ+Z\,\nabla h^{\top}$ and $h(p)=0$ we get
		$J=Z(p)\nabla h(p)^{\top}$, of rank one because both factors are
		nonzero, with kernel $\ker\nabla h(p)^{\top}$ and trace
		$\ip{\nabla h(p)}{Z(p)}$; its determinant vanishes, so it is nilpotent
		exactly when its trace does.  The same computation at any $w$ with
		$h(w)=0$ gives $DX(w)=Z(w)\nabla h(w)^{\top}$, of rank at most one, so
		$\dt DX\equiv0$ on the curve $\{h=0\}$; as $q_{0}$ is tangent to that
		curve, $\ip{\nabla\!\dt DX(p)}{q_{0}}=0$ and Theorem~\ref{thm:A} gives
		$a=0$.  Finally $\tr DX=h\,\tr DZ+\ip{\nabla h}{Z}$; differentiating
		along $q_{0}$ and using $h(p)=0$, $\ip{\nabla h(p)}{q_{0}}=0$ leaves
		the two terms in \eqref{eq:abproduct}.
		
		For the last assertion, let $\Gamma$ be a curve of equilibria through
		$p$.  Differentiating $X|_{\Gamma}\equiv0$ gives
		$T_{p}\Gamma\subset\ker DX(p)$, and both are one-dimensional; at every
		point of $\Gamma$ the Jacobian has the tangent to $\Gamma$ in its
		kernel, so $\dt DX\equiv0$ along $\Gamma$ and Theorem~\ref{thm:A}
		applies.
	\end{proof}
	
	\begin{remark}\label{rem:computational}
		Theorem~\ref{thm:A} reduces the computation of $a$ and $b$ to two
		directional derivatives of $\tr DX$ and $\dt DX$, replacing the
		solution of four linear systems and a biorthonormalization.  All closed
		forms in Sections~\ref{sec:family}--\ref{sec:models} were obtained this
		way and cross-checked against \eqref{eq:abdef}.
	\end{remark}

	\section{The kernel line field and the two degeneracies}
	\label{sec:geometry}
	
	Let $X\colon\R^{2}\times\R^{r}\to\R^{2}$ be a smooth family, and assume
	$0$ is a regular value of $X$, so that
	$E:=\{(x,\lambda): X(x,\lambda)=0\}$ is an $r$-dimensional manifold.
	Let $\pi\colon E\to\R^{r}$ be the restriction of the projection, and on
	$E$ define
	\[
	T:=\tr D_{x}X ,\qquad
	D:=\dt D_{x}X ,\qquad
	\Sigma:=\{D=0\},\qquad
	\Hh:=\{T=0\} .
	\]
	The set $\Sigma$ is the fold, or limit-point, set of the family, and
	$\Hh$ is the neutrality set, whose two halves $\{D>0\}$ and $\{D<0\}$
	consist of Hopf points and of neutral saddles.  The Bogdanov--Takens
	set is $\Sigma\cap\Hh$.
	
	\begin{lemma}\label{lem:kernel}
		For $q=(x_{0},\lambda_{0})\in E$,
		$\ker\bigl(d\pi_{q}|_{T_{q}E}\bigr)=\ker D_{x}X(q)\times\{0\}$.  In
		particular this kernel is a line $\ell_{q}$ exactly on
		$\Sigma\setminus\{D_{x}X=0\}$, spanned by $(q_{0},0)$ with
		$q_{0}\in\ker D_{x}X(q)$.
	\end{lemma}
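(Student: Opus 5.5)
The plan is to compute the tangent space of $E$ at $q$ explicitly and then intersect it with the kernel of the projection. Since $0$ is a regular value of $X$, the implicit function theorem gives
\[
T_{q}E=\ker DX(q)=\{(\xi,\mu)\in\R^{2}\times\R^{r}: D_{x}X(q)\,\xi+D_{\lambda}X(q)\,\mu=0\}.
\]
The projection $\pi$ is the restriction of the linear map $(x,\lambda)\mapsto\lambda$, so $d\pi_{q}(\xi,\mu)=\mu$. Consequently a vector $(\xi,\mu)\in T_{q}E$ lies in $\ker d\pi_{q}$ exactly when $\mu=0$. Substituting $\mu=0$ into the defining equation of $T_{q}E$ leaves $D_{x}X(q)\xi=0$. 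Conversely, every $(\xi,0)$ with $\xi\in\ker D_{x}X(q)$ satisfies that equation. This proves
\[
\ker\bigl(d\pi_{q}|_{T_{q}E}\bigr)=\ker D_{x}X(q)\times\{0\}.
\]

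For the second assertion, the kernel is a line if and only if $\dim\ker D_{x}X(q)=1$, that is, $\rank D_{x}X(q)=1$. For a $2\times2$ matrix, rank one is equivalent to having zero determinant without being the zero matrix. The condition $D(q)=0$ places $q$ in $\Sigma$, and excluding $D_{x}X=0$ leaves exactly $\Sigma\setminus\{D_{x}X=0\}$. At such points the kernel is spanned by $(q_{0},0)$ with $0\neq q_{0}\in\ker D_{x}X(q)$.

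The only step that needs any care is identifying $T_{q}E$ with $\ker DX(q)$, which is where the regular-value hypothesis enters. Everything after that is linear algebra, so I do not expect a real obstacle.
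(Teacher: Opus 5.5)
Your proof is correct and follows the paper's argument: the paper's proof consists exactly of writing $T_{q}E$ as the solution set of $D_{x}X\,\xi+D_{\lambda}X\,\eta=0$ (via the regular-value hypothesis) and noting $d\pi(\xi,\eta)=\eta$. Your added observation that a $2\times2$ matrix has rank one if and only if it is singular and nonzero supplies the justification for the ``in particular'' clause, which the paper leaves implicit.
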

	
	\begin{proof}
		$T_{q}E=\{(\xi,\eta): D_{x}X\,\xi+D_{\lambda}X\,\eta=0\}$ and
		$d\pi(\xi,\eta)=\eta$.
	\end{proof}
	
	\begin{theorem}[Theorem B]\label{thm:B}
		Let $q\in E$ be a Bogdanov--Takens point, that is $T(q)=D(q)=0$ and
		$\rank D_{x}X(q)=1$, and let $\ell_{q}=\langle(q_{0},0)\rangle$.  Then,
		with $a,b$ the coefficients associated with $q_{0}$ (see
		Figure~\ref{fig:kernel}),
		\begin{enumerate}[label=\rm(\roman*)]
			\item $dD_{q}(q_{0},0)=-2a$ and $dT_{q}(q_{0},0)=b$;
			\item $a\neq0$ if and only if
			$\ell_{q}\not\subset T^{\mathrm{lin}}_{q}\Sigma
			:=\ker\bigl(dD_{q}|_{T_{q}E}\bigr)$; in that case
			$dD_{q}|_{T_{q}E}\neq0$, so $\Sigma$ is a hypersurface of $E$ near $q$
			with $T_{q}\Sigma=T^{\mathrm{lin}}_{q}\Sigma$, and $\pi|_{E}$ is a
			Whitney fold at $q$;
			\item if $dT_{q}|_{T_{q}E}\neq0$, so that $\Hh$ is a hypersurface of
			$E$ near $q$ and $T_{q}\Hh=\ker\bigl(dT_{q}|_{T_{q}E}\bigr)$, then
			\[
			b=0
			\iff \ell_{q}\subset T_{q}\Hh
			\iff \pi|_{\Hh}\ \text{is not an immersion at }q ;
			\]
			\item conversely, if $dT_{q}|_{T_{q}E}\neq0$, if $\pi|_{\Hh}$ fails to
			be an immersion at $q\in\Hh$ and if $D_{x}X(q)\neq0$, then $q$ is a
			Bogdanov--Takens point with $b=0$.
		\end{enumerate}
	\end{theorem}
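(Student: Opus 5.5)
The proof will reduce everything to Theorem~\ref{thm:A} applied to the frozen field $X(\cdot,\lambda_{0})$, combined with Lemma~\ref{lem:kernel} and elementary linear algebra on $T_{q}E$.

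\textbf{Part (i).} Since $(q_{0},0)$ has zero $\lambda$-component, the derivatives $dD_{q}(q_{0},0)$ and $dT_{q}(q_{0},0)$ are the $x$-directional derivatives of $\dt D_{x}X(\cdot,\lambda_{0})$ and $\tr D_{x}X(\cdot,\lambda_{0})$ at $x_{0}$ along $q_{0}$. Here $D$ and $T$ are understood as restrictions to $E$ of the ambient functions, and $(q_{0},0)\in T_{q}E$ by Lemma~\ref{lem:kernel}, so these derivatives are intrinsic. At a BT point $D_{x}X(q)$ is nilpotent of rank one, because its trace and determinant vanish and it is nonzero. Theorem~\ref{thm:A} therefore gives $-2a$ and $b$ directly.

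\textbf{Part (ii).} By (i), $\ell_{q}\subset\ker(dD_{q}|_{T_{q}E})$ if and only if $a=0$. If $a\neq0$, then $dD_{q}|_{T_{q}E}$ is a nonzero covector on $T_{q}E$. The implicit function theorem then makes $\Sigma$ a hypersurface of $E$ near $q$ with $T_{q}\Sigma=T^{\mathrm{lin}}_{q}\Sigma$.

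For the Whitney fold I would argue as follows. The kernel of $d\pi$ is one-dimensional along $\Sigma$ near $q$; indeed $D_{x}X\neq0$ persists near $q$, so Lemma~\ref{lem:kernel} applies. The kernel line is transverse to $\Sigma$ at $q$ because $a\neq0$. So $\pi|_{E}$ is a corank-one map whose singular set $\Sigma$ is a smooth hypersurface (the nondegeneracy needed is $dD_{q}\neq 0$ on $T_{q}E$), with kernel transverse to that set. This is precisely the Whitney fold condition, and the standard fold normal form theorem finishes the step. I expect this to be the main obstacle, and the delicate point is identifying the singular set of $\pi$ with $\Sigma=\{D=0\}$ including the regularity condition. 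I would handle it as follows: $\pi$ is singular exactly where $D_{x}X$ fails to be invertible, because $T_{q}E\to\R^{r}$ has rank $r$ iff $\ker D_{x}X=0$. Moreover $D$ serves as a defining function for this set with nonzero differential on $T_{q}E$, so the singular set is regular. The fold criterion is then checked by testing that $dD$ is nonzero on the kernel line, which is exactly $a\neq0$.

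\textbf{Parts (iii) and (iv).} For (iii), $T_{q}\Hh=\ker(dT_{q}|_{T_{q}E})$, and by (i) this contains $\ell_{q}$ iff $b=0$. Next, $d(\pi|_{\Hh})_{q}$ is the restriction of $d\pi_{q}$ to $T_{q}\Hh$, whose kernel is $\ell_{q}\cap T_{q}\Hh$ by Lemma~\ref{lem:kernel}. Hence $\pi|_{\Hh}$ fails to be an immersion iff $\ell_{q}\subset T_{q}\Hh$.

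For (iv), failure of immersion at $q\in\Hh$ means $\ker d\pi_{q}\cap T_{q}\Hh\neq0$. In particular $\ker d\pi_{q}\neq 0$, so by Lemma~\ref{lem:kernel} $D_{x}X(q)$ is singular and $D(q)=0$. Together with $T(q)=0$ and $D_{x}X(q)\neq0$, this shows $D_{x}X(q)$ is nilpotent of rank one, so $q$ is a BT point. Part (iii) then gives $b=0$.
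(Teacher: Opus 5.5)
Your proposal is correct and follows the paper's proof essentially step by step. Part (i) is Theorem~\ref{thm:A} applied to the frozen field, with $(q_{0},0)\in T_{q}E$ by Lemma~\ref{lem:kernel}; parts (ii)--(iv) are linear algebra on $T_{q}E$, the standard Whitney fold criterion, and the fact that a nonzero $2\times2$ matrix with vanishing trace and determinant is nilpotent of rank one. Your extra check that the critical set of $\pi|_{E}$ is $\{D=0\}$ goes a little beyond the paper. To invoke the fold criterion one strictly needs $D$ to equal the Jacobian determinant of $\pi|_{E}$ up to a nonvanishing factor, which follows from writing $E$ locally as a graph and applying Cramer's rule; the paper also leaves this point implicit.
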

	
	\begin{remark}\label{rem:regularityB}
		Two points of regularity are worth making explicit.  In~(ii) the
		linearized, or Zariski, tangent space
		$T^{\mathrm{lin}}_{q}\Sigma=\ker(dD_{q}|_{T_{q}E})$ is used rather than
		$T_{q}\Sigma$, because when $a=0$ it may happen that
		$dD_{q}|_{T_{q}E}=0$, in which case $\Sigma$ is not a regular level set
		near $q$ and $T_{q}\Sigma$ is undefined; with the linearized
		convention, $T^{\mathrm{lin}}_{q}\Sigma=T_{q}E$ in that degenerate case
		and the equivalence still reads correctly.  Parts~(iii) and~(iv) carry
		the hypothesis $dT_{q}|_{T_{q}E}\neq0$ for the same reason: $\Hh$ must
		be a smooth hypersurface before $\pi|_{\Hh}$ can be called an
		immersion.  Throughout, $\Sigma$ and $\Hh$ denote the germs at $q$ of
		the corresponding level sets, not their global loci.
	\end{remark}
	
	\begin{proof}
		(i) is Theorem~\ref{thm:A} read on $E$: the functions $T$ and $D$ are
		the restrictions to $E$ of $\tr D_{x}X$ and $\dt D_{x}X$, and
		$(q_{0},0)$ is tangent to $E$ by Lemma~\ref{lem:kernel}.
		
		(ii) If $a\neq0$ then $dD_{q}(q_{0},0)=-2a\neq0$ by~(i), so
		$dD_{q}|_{T_{q}E}\neq0$, the set $\Sigma$ is a hypersurface with
		$T_{q}\Sigma=T^{\mathrm{lin}}_{q}\Sigma=\ker(dD_{q}|_{T_{q}E})$, and
		$\ell_{q}\not\subset T_{q}\Sigma$.  Since $\Sigma$ is the critical set
		of $\pi|_{E}$, the condition $\ell_{q}\oplus T_{q}\Sigma=T_{q}E$ is the
		standard criterion for a Whitney fold.  Conversely, if $a=0$ then
		$dD_{q}(q_{0},0)=0$, so $\ell_{q}\subset\ker(dD_{q}|_{T_{q}E})
		=T^{\mathrm{lin}}_{q}\Sigma$ whether or not $dD_{q}|_{T_{q}E}$
		vanishes, and the criterion fails.
		
		(iii) $T_{q}\Hh=\ker dT_{q}$ and $dT_{q}(q_{0},0)=b$, giving the first
		equivalence; $\pi|_{\Hh}$ fails to be an immersion at $q$ exactly when
		$T_{q}\Hh$ contains a nonzero vector of $\ell_{q}$.
		
		(iv) If $\pi|_{\Hh}$ is not an immersion, $T_{q}\Hh$ meets
		$\ker d\pi_{q}|_{T_{q}E}$ nontrivially, so $\dt D_{x}X(q)=0$; together
		with $\tr D_{x}X(q)=0$ this makes $D_{x}X(q)$ nilpotent, and nonzero by
		hypothesis.  Then $\ell_{q}\subset T_{q}\Hh$ and (iii) gives $b=0$.
	\end{proof}
	
	\begin{figure}[ht]
		\centering
		\begin{tikzpicture}[line cap=round,>=latex,scale=0.87]
			\begin{scope}[xshift=0cm]
				\draw[gray!65,thin] (0,0) -- (3.6,0) -- (4.4,2.0) -- (0.8,2.0) -- cycle;
				\node[gray!65] at (0.40,1.70) {\footnotesize $E$};
				\draw (0.65,0.50) .. controls (1.50,0.72) .. (2.30,1.00)
				.. controls (3.10,1.28) .. (3.95,1.50);
				\node at (4.22,1.63) {\footnotesize $\Sigma$};
				\draw[dashed] (1.05,1.85) .. controls (1.70,1.42) .. (2.30,1.00)
				.. controls (2.90,0.58) .. (3.52,0.30);
				\node at (0.88,1.99) {\footnotesize $\Hh$};
				\draw[very thick,<->] (2.30,0.42) -- (2.30,1.58);
				\node at (2.62,1.52) {\footnotesize $\ell$};
				\fill (2.30,1.00) circle (1.6pt);
				\node at (1.90,0.65) {\footnotesize $q$};
				\node at (2.20,-0.62) {\footnotesize (a)\ \ $a\neq0,\ b\neq0$};
			\end{scope}
			\begin{scope}[xshift=4.95cm]
				\draw[gray!65,thin] (0,0) -- (3.6,0) -- (4.4,2.0) -- (0.8,2.0) -- cycle;
				\node[gray!65] at (0.40,1.70) {\footnotesize $E$};
				\draw (0.65,0.50) .. controls (1.50,0.72) .. (2.30,1.00)
				.. controls (3.10,1.28) .. (3.95,1.50);
				\node at (4.22,1.63) {\footnotesize $\Sigma$};
				\draw[dashed] (1.05,1.85) .. controls (1.70,1.42) .. (2.30,1.00)
				.. controls (2.90,0.58) .. (3.52,0.30);
				\node at (0.88,1.99) {\footnotesize $\Hh$};
				\draw[very thick,<->] (1.82,1.34) -- (2.78,0.66);
				\node at (2.98,0.75) {\footnotesize $\ell$};
				\fill (2.30,1.00) circle (1.6pt);
				\node at (1.90,0.65) {\footnotesize $q$};
				\node at (2.20,-0.62) {\footnotesize (b)\ \ $a\neq0,\ b=0$};
			\end{scope}
			\begin{scope}[xshift=9.90cm]
				\draw[gray!65,thin] (0,0) -- (3.6,0) -- (4.4,2.0) -- (0.8,2.0) -- cycle;
				\node[gray!65] at (0.40,1.70) {\footnotesize $E$};
				\draw (0.65,0.50) .. controls (1.50,0.72) .. (2.30,1.00)
				.. controls (3.10,1.28) .. (3.95,1.50);
				\node at (4.22,1.63) {\footnotesize $\Sigma$};
				\draw[dashed] (1.05,1.85) .. controls (1.70,1.42) .. (2.30,1.00)
				.. controls (2.90,0.58) .. (3.52,0.30);
				\node at (0.88,1.99) {\footnotesize $\Hh$};
				\draw[very thick,<->] (1.74,0.81) -- (2.86,1.19);
				\node at (3.02,1.45) {\footnotesize $\ell$};
				\fill (2.30,1.00) circle (1.6pt);
				\node at (1.90,0.60) {\footnotesize $q$};
				\node at (2.20,-0.62) {\footnotesize (c)\ \ $a=0,\ b\neq0$};
			\end{scope}
		\end{tikzpicture}
		\caption{The three configurations at a Bogdanov--Takens point
			$q\in\Sigma\cap\Hh$.  The quadrilateral is a patch of the equilibrium
			manifold $E$; the solid curve is the fold set $\Sigma=\{\dt J=0\}$, the
			dashed curve is the neutrality set $\Hh=\{\tr J=0\}$, and the
			double arrow is the line $\ell=\ker(d\pi|_{TE})$, the direction that
			the projection onto parameter space collapses.  By
			Theorem~\ref{thm:B}, $a$ measures the failure of $\ell$ to be tangent
			to $\Sigma$ and $b$ its failure to be tangent to $\Hh$: in (a) the
			point is nondegenerate; in (b) $\ell$ is tangent to $\Hh$, so
			$\pi|_{\Hh}$ is not an immersion; in (c) $\ell$ is tangent to
			$\Sigma$, so $\pi|_{E}$ is not a Whitney fold.  Note that $\Sigma$ and
			$\Hh$ remain transverse in all three pictures
			(Remark~\ref{rem:nottransversality}).  By Theorem~\ref{thm:C},
			configuration (c) cannot occur with an isolated equilibrium in the
			class of Section~\ref{sec:family}.}
		\label{fig:kernel}
	\end{figure}
	
	\begin{remark}\label{rem:nottransversality}
		It is tempting to read $b=0$ as a loss of transversality between
		$\Sigma$ and $\Hh$ inside $E$.  This is false as soon as $p\geq3$.  For
		Model~2 of Section~\ref{sec:models}, at the degenerate point
		$q=(x,y,\alpha,\beta,\gamma)=(4,4,\tfrac14,\tfrac14,1)$, where $b=0$,
		one computes
		\[
		dT|_{q}=\bigl(\tfrac3{16},-\tfrac3{16},1,-1,-1\bigr),
		\qquad
		dD|_{q}=\bigl(-\tfrac1{16},-\tfrac1{16},-3,-3,-\tfrac12\bigr),
		\]
		while the vectors $w_{1}=(1,1,0,0,0)$ and $w_{2}=(0,0,1,-1,0)$ both
		lie in the three-dimensional space $T_{q}E$.  Since
		\[
		\det\begin{pmatrix}
			dT(w_{1})&dT(w_{2})\\[2pt] dD(w_{1})&dD(w_{2})
		\end{pmatrix}
		=\det\begin{pmatrix}0&2\\[2pt] -\tfrac18&0\end{pmatrix}
		=\tfrac14\neq0 ,
		\]
		the restrictions of $dT$ and $dD$ to $T_{q}E$ are linearly
		independent, and no choice of basis is involved: $\Sigma$ and $\Hh$
		\emph{are} transverse.  This must be so, since $\Sigma\cap\Hh$ is a curve, of the
		expected dimension $p-2=1$.  What degenerates is the position of the
		kernel line $\ell_{q}$ relative to $\Hh$, not the relative position of
		$\Sigma$ and $\Hh$.
	\end{remark}
	
	\begin{remark}\label{rem:thomboardman}
		Theorem~\ref{thm:B}(ii) places a point with $a=0$ in the
		Thom--Boardman stratum $\Sigma^{1,1}$ of $\pi|_{E}$.  It does not by
		itself make the point a cusp: being a stable cusp requires a further
		nondegeneracy, which by Theorem~\ref{thm:C} is never available in the
		class of Section~\ref{sec:family}.
	\end{remark}
	
	\begin{remark}\label{rem:notextremum}
		Theorem~\ref{thm:B} is stated in terms of the line field $\ell$ and not
		in terms of a critical value of a distinguished parameter, because the
		latter formulation is not invariant.  In Model~2 of
		Section~\ref{sec:models} the parameter $\alpha$ restricted to the BT
		curve is $\gamma/(1+\gamma)^{2}$, whose derivative
		$(1-\gamma)/(1+\gamma)^{3}$ vanishes at the degenerate point
		$\gamma=1$; but the linear change of parameter coordinates
		$\alpha'=\alpha+\beta$ gives $\alpha'=1/(1+\gamma)$ along the same
		curve, with nowhere vanishing derivative.  The degeneracy $b=0$ at
		$\gamma=1$ is of course unaffected.
	\end{remark}

	\section{Multiplicity and the mixed-volume bound}
	\label{sec:mult}
	
	Unlike Theorem~\ref{thm:A}, which needs only the $2$-jet and holds for
	$C^{2}$ fields, everything in this section is an analytic, formal or
	algebraic statement: the local multiplicity and the Takens form are not
	claimed for merely $C^{2}$ vector fields.
	
	
	\subsection{Multiplicity of a nilpotent equilibrium}
	
	For an isolated zero $p$ of a real analytic planar vector field $X$ we
	write $\mu_{p}(X)=\dim_{\C}\C[[x-x_{0},y-y_{0}]]/(X_{1},X_{2})$, the
	local ring being taken over $\C$ with coordinates centred at
	$p=(x_{0},y_{0})$ even when $X$ has real coefficients, and similarly
	$\mu_{p}(A,B)$ for a pair of functions.
	
	\begin{lemma}[Uniqueness of the quadratic part]\label{lem:quadratic}
		Let $X$ be a formal planar vector field with $X(0)=0$ and
		$DX(0)=N:=\left(\begin{smallmatrix}0&1\\0&0\end{smallmatrix}\right)$.
		Among the formal changes of coordinates tangent to the identity, the
		quadratic part of $X$ can be brought to
		$a\,u^{2}\partial_{v}+b\,uv\,\partial_{v}$, and the pair $(a,b)$ so
		obtained depends only on the $2$-jet of $X$.
	\end{lemma}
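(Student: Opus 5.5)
I will treat the statement as a computation in the space $H_{2}$ of homogeneous quadratic vector fields on $\C^{2}$ (or $\R^{2}$), which has dimension $6$. A formal change of coordinates tangent to the identity, $z\mapsto z+\phi(z)+O(|z|^{3})$ with $\phi\in H_{2}$, leaves the linear part $N$ unchanged. It also leaves every component of degree $\ge3$ out of the quadratic computation, so it alters the quadratic part $X_{2}$ of $X$ by the homological operator:
\[
X_{2}\longmapsto X_{2}-L_{N}\phi,\qquad L_{N}\phi:=D\phi\,N-N\phi=[Nz,\phi].
\]
The quadratic parts reachable from $X_{2}$ are therefore exactly the coset $X_{2}+\operatorname{Im}L_{N}$. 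Higher-order terms of the change only affect degrees $\ge3$, so the resulting quadratic part depends only on the $2$-jet of $X$ and on the $2$-jet of the change.

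The plan has three steps.
\begin{enumerate}[label=\rm(\arabic*)]
	\item Compute $L_{N}$ on the monomial basis $u^{i}v^{j}\partial_{u}$, $u^{i}v^{j}\partial_{v}$ with $i+j=2$. Writing $\phi=(\phi_{1},\phi_{2})$, one gets
	\[
	L_{N}\phi=\bigl(v\,\partial_{u}\phi_{1}-\phi_{2},\;v\,\partial_{u}\phi_{2}\bigr).
	\]
	This is routine.
	\item Show that $\operatorname{Im}L_{N}$ has dimension $4$. The kernel consists of the $\phi$ with $\partial_{u}\phi_{2}=0$ and $\phi_{2}=v\,\partial_{u}\phi_{1}$. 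This forces $\phi_{2}=cv^{2}$ and $\phi_{1}=cuv+c'v^{2}$, so the kernel is $2$-dimensional and the image has dimension $6-2=4$.
	\item Show that $\operatorname{span}\{u^{2}\partial_{v},\,uv\,\partial_{v}\}$ is a complement of $\operatorname{Im}L_{N}$. The second component of the image is $v\,\partial_{u}\phi_{2}$, which is divisible by $v$, and the first component is arbitrary once $\phi_{2}$ is fixed. Hence $\operatorname{Im}L_{N}$ contains $H_{2}\partial_{u}$ together with $v^{2}\partial_{v}$ and $uv\,\partial_{v}$. To see that both can be hit independently, use the correction of $\phi_{1}$ that absorbs the $-\phi_{2}$ term. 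The dimension count then forces the complement statement, provided the chosen normal-form monomials are independent modulo the image.
\end{enumerate}

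Step (3) is where I expect the main obstacle. A naive reading of the second component suggests $uv\,\partial_{v}$ lies in the image, which would contradict the statement. The claim must therefore be argued with the coupling between components taken into account: killing $uv\,\partial_{v}$ via $\phi_{2}$ reintroduces $-\phi_{2}$ in the first component. The clean check is to exhibit two linear functionals on $H_{2}$ that vanish on $\operatorname{Im}L_{N}$ and are independent on $\operatorname{span}\{u^{2}\partial_{v},uv\,\partial_{v}\}$. I would first try the functionals that read off the coefficient $a$ of $u^{2}$ in the second component and the combination $b=2\,[u^{2}]X_{1}+[uv]X_{2}$. The first vanishes on the image because $v\,\partial_{u}\phi_{2}$ contains no $u^{2}$ term. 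For the second I will verify vanishing directly on the six basis images. These are the quantities of \eqref{eq:abdef} written in the basis $q_{0}=e_{1}$, $q_{1}=e_{2}$, with $p_{0}=e_{1}$, $p_{1}=e_{2}$.

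With these two invariant functionals in hand, both conclusions follow. Existence: a complement of dimension $2$ is reached by choosing $\phi$ to cancel the image part of $X_{2}$. Uniqueness: two normal forms $a\,u^{2}\partial_{v}+b\,uv\,\partial_{v}$ and $a'\,u^{2}\partial_{v}+b'\,uv\,\partial_{v}$ obtained from the same $X$ differ by an element of $\operatorname{Im}L_{N}$, so the invariant functionals give $a=a'$ and $b=b'$. Both depend only on the $2$-jet of $X$. As a consistency check, this agrees with Theorem~\ref{thm:A}, which expresses the same $a,b$ through $\tr$ and $\dt$ of $DX$.
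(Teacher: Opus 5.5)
Your proof is correct once step~(3) is reduced to the functional argument. The first half of that step, however, is false as written and should be removed rather than left standing next to your later caveat. With $\phi_{1}=c_{1}u^{2}+c_{2}uv+c_{3}v^{2}$ and $\phi_{2}=c_{4}u^{2}+c_{5}uv+c_{6}v^{2}$,
\[
L_{N}\phi=\bigl(-c_{4}u^{2}+(2c_{1}-c_{5})uv+(c_{2}-c_{6})v^{2},\;2c_{4}uv+c_{5}v^{2}\bigr).
\]
The first component is \emph{not} arbitrary once $\phi_{2}$ is fixed: its $u^{2}$-coefficient is forced to be $-c_{4}$. This is because $v\,\partial_{u}\phi_{1}$ is divisible by $v$, so no correction of $\phi_{1}$ can absorb the $u^{2}$-part of $-\phi_{2}$. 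Hence
\[
\operatorname{Im}L_{N}=\operatorname{span}\{uv\,\partial_{u},\,v^{2}\partial_{u},\,v^{2}\partial_{v},\,u^{2}\partial_{u}-2uv\,\partial_{v}\},
\]
which contains neither $u^{2}\partial_{u}$ nor $uv\,\partial_{v}$. Your assertion that it contains $H_{2}\partial_{u}$ together with $v^{2}\partial_{v}$ and $uv\,\partial_{v}$ would make it $5$-dimensional, contradicting your own step~(2) and the lemma itself.

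The same display supplies the check you deferred. Put $\ell_{1}:=[u^{2}]X_{2}$ and $\ell_{2}:=2[u^{2}]X_{1}+[uv]X_{2}$. Then $\ell_{1}(L_{N}\phi)=0$ and $\ell_{2}(L_{N}\phi)=-2c_{4}+2c_{4}=0$, so $\operatorname{Im}L_{N}=\ker\ell_{1}\cap\ker\ell_{2}$ by dimension. Since $(\ell_{1},\ell_{2})$ reads off $(\alpha,\beta)$ on $\alpha u^{2}\partial_{v}+\beta uv\,\partial_{v}$, that span is a complement.

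With this repair, your route differs from the paper's only in how the complement is certified. The paper argues directly on image elements. Every element of $\operatorname{Im}L$ has second component in $\operatorname{span}\{uv,v^{2}\}$, and one with vanishing first component has second component in $\operatorname{span}\{v^{2}\}$. So no nonzero $\alpha u^{2}\partial_{v}+\beta uv\,\partial_{v}$ lies in the image. You instead describe the image dually, as the common kernel of two explicit functionals.

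The paper's check is slightly shorter. Yours gives the invariants in closed form, $a=[u^{2}]X_{2}$ and $b=2[u^{2}]X_{1}+[uv]X_{2}$. This makes the ``depends only on the $2$-jet'' clause immediate. As you observe, it also identifies them with \eqref{eq:abdef} for $q_{0}=p_{0}=e_{1}$, $q_{1}=p_{1}=e_{2}$, equivalently with $-\tfrac12\,\partial_{u}(\dt DX)(0)$ and $\partial_{u}(\tr DX)(0)$. That identification is exactly what the paper needs later in the proof of Lemma~\ref{lem:mult}, where it appeals to the proof of Theorem~\ref{thm:A}; your functionals deliver it inside the lemma itself.
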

	
	\begin{proof}
		Let $H_{2}$ be the six-dimensional space of homogeneous quadratic
		vector fields and $L(h):=Dh\cdot Nz-Nh$ the homological operator, so
		that the substitution $z\mapsto z+h(z)$ with $h\in H_{2}$ replaces the
		quadratic part $X_{2}$ by $X_{2}-L(h)$ and leaves the linear part
		unchanged.  Writing $h=(h_{1},h_{2})$,
		\[
		L(h)=\bigl(v\,\partial_{u}h_{1}-h_{2},\;v\,\partial_{u}h_{2}\bigr).
		\]
		If $L(h)=0$ then $\partial_{u}h_{2}=0$, so $h_{2}=cv^{2}$, and then
		$v\,\partial_{u}h_{1}=cv^{2}$ gives $h_{1}=cuv+c'v^{2}$; hence
		$\dim\ker L=2$ and $\dim\operatorname{Im}L=4$.  The second component
		of any element of $\operatorname{Im}L$ lies in
		$\operatorname{span}\{uv,v^{2}\}$, which excludes
		$u^{2}\partial_{v}$; and an element of $\operatorname{Im}L$ whose
		first component vanishes has $c_{5}=0$ in the notation above, so its
		second component lies in $\operatorname{span}\{v^{2}\}$, which
		excludes $uv\,\partial_{v}$.  The same two observations show that no
		nonzero combination $\alpha u^{2}\partial_{v}+\beta uv\,\partial_{v}$
		lies in $\operatorname{Im}L$.  Thus
		$\operatorname{span}\{u^{2}\partial_{v},uv\,\partial_{v}\}$ is a
		complement of $\operatorname{Im}L$ in $H_{2}$, and $X_{2}$ has a unique
		representative in it.  Changes of order $\ge3$ do not affect the
		quadratic part.
	\end{proof}
	
	\begin{lemma}\label{lem:mult}
		Let $X$ be real analytic with an isolated equilibrium $p$ at which
		$DX(p)$ is nilpotent and nonzero.  Then, in the Takens form
		$\dot u=v$, $\dot v=f(u)+v\,g(u)$,
		\[
		\mu_{p}(X)=\operatorname{ord}f\geq2 ,
		\]
		and $\mu_{p}(X)=2$ if and only if $a\neq0$.
	\end{lemma}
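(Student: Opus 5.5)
We prove the lemma in three steps: invariance of the multiplicity, its computation in Takens form, and the link between $\operatorname{ord}f=2$ and $a\neq0$.

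\emph{Invariance of $\mu_{p}$.} The multiplicity $\mu_{p}(X)=\dim_{\C}\C[[z]]/(X_{1},X_{2})$ does not change under formal changes of coordinates, nor under multiplication of $X$ by an invertible formal function. Under a formal diffeomorphism $\varphi$ the transformed field is $(D\varphi)^{-1}X\circ\varphi$. Composition with $\varphi$ is an automorphism of $\C[[z]]$, and multiplication by the invertible matrix $(D\varphi)^{-1}$ does not change the ideal generated by the components. For an orbital multiplier $h$ with $h(p)\neq0$, the ideal $(hX_{1},hX_{2})$ equals $(X_{1},X_{2})$. So we may compute $\mu_{p}$ in any formal normal form. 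By the classical theorem of Takens, a germ with nonzero nilpotent linear part is formally conjugate to $\dot u=v$, $\dot v=f(u)+vg(u)$ with $f(0)=f'(0)=0$. Analyticity guarantees that the multiplicity of the analytic germ agrees with that of its formal completion.

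\emph{Computing the multiplicity in Takens form.} The ideal is $(v,\,f(u)+vg(u))=(v,f(u))$. Hence
\[
\C[[u,v]]/(v,f(u))\cong\C[[u]]/(f(u)),
\]
which has dimension $\operatorname{ord}f$. This order is finite because $p$ is isolated: if $f\equiv0$ formally, the quotient would be infinite-dimensional, and for an analytic germ that contradicts isolatedness by the Rückert Nullstellensatz. Since $f$ has no constant or linear term, $\operatorname{ord}f\ge2$.

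\emph{Relating $\operatorname{ord}f=2$ to $a\neq0$.} First choose linear coordinates with $q_{0}\mapsto(1,0)^{\top}$ and $DX(p)=N$. The quadratic part of the Takens form is $f''(0)/2\cdot u^{2}\partial_{v}+g'(0)\cdot uv\,\partial_{v}$, which lies in $\operatorname{span}\{u^{2}\partial_{v},uv\,\partial_{v}\}$. The Takens normalizing map can be taken with linear part fixing $N$, and any such linear map is $\operatorname{diag}(c,c)$ composed with a unipotent map. Combined with the uniqueness statement of Lemma~\ref{lem:quadratic}, this gives $f''(0)/2=c\,a$ for some $c\neq0$. Equivalently, this follows from Lemma~\ref{lem:invLHS} and Lemma~\ref{lem:scaling}: in Jordan coordinates the Kuznetsov data give $a=\tfrac12\partial_{u}^{2}X_{2}(0)$, which is the $u^{2}\partial_{v}$ coefficient. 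In either form, the conclusion is that $\operatorname{ord}f=2$ if and only if $a\neq0$.

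The main obstacle is bookkeeping in the last step. The Takens normal form is reached through changes whose linear part may rescale $q_{0}$ and may include an orbital factor. We must check that these affect $a$ only by a nonzero factor. For changes of coordinates this is Lemma~\ref{lem:scaling}; for orbital factors it is Corollary~\ref{cor:orbital}, which gives $a\mapsto h(p)^{2}a$. Once this is in place, the rest is the one-line ideal computation above.
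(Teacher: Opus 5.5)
Your proposal is correct and follows the paper's proof. Both arguments use the invariance of $\mu_{p}$ under formal conjugacy, the computation $\C[[u,v]]/(v,f+vg)\cong\C[[u]]/(f)$, and the identification of the $u^{2}\partial_{v}$ coefficient with $a$ through the uniqueness statement of Lemma~\ref{lem:quadratic} and the rescaling rule of Lemma~\ref{lem:scaling}. Your explicit remark that the linear part of the normalizing map commutes with $N$, hence is $c(I+eN)$ and multiplies the $u^{2}$-coefficient of the second component by $c$, makes precise a step the paper leaves to Lemma~\ref{lem:scaling}; the orbital-factor bookkeeping is unnecessary, since the form $\dot u=v$, $\dot v=f(u)+v\,g(u)$ is reached by conjugacy alone.
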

	
	\begin{proof}
		A formal change of coordinates brings the germ to the Takens form
		\cite{Takens1974,Dumortier1977,DLA2006}, and $\mu_{p}$ is unchanged,
		being the dimension of the local algebra of the formal ideal.  Then
		\[
		\mu_{p}(X)=\dim_{\C}\C[[u,v]]/(v,\,f(u)+v\,g(u))
		=\dim_{\C}\C[[u]]/(f)=\operatorname{ord}f ,
		\]
		finite because $p$ is isolated; we set $\ordop 0=\infty$, so that the
		lemma is stated only for isolated zeros, where $f\not\equiv0$.  The
		inequality
		$\operatorname{ord}f\ge2$ holds because in the Takens form
		\[
		DX(0)=\begin{pmatrix}0&1\\ f'(0)&g(0)\end{pmatrix},
		\]
		so nilpotency forces $g(0)=\tr DX(0)=0$ and $f'(0)=-\dt DX(0)=0$,
		while $f(0)=0$ because $p$ is an equilibrium.
		
		It remains to identify the coefficient of $u^{2}$ in $f$ with the $a$
		of \eqref{eq:abdef}.  Both \eqref{eq:BT} and the Takens form have
		$\dot u=v$ and quadratic part contained in
		$\operatorname{span}\{u^{2}\partial_{v},uv\,\partial_{v}\}$: for the
		Takens form, $f(u)+v\,g(u)$ contributes $a_{2}u^{2}+b_{1}uv$ at order
		two.  By Lemma~\ref{lem:quadratic} that representative is unique, so
		$a_{2}$ is the $a$ of \eqref{eq:BT}, which the proof of
		Theorem~\ref{thm:A} identifies with \eqref{eq:abdef} computed from a
		basis adapted to $q_{0}$; by Lemma~\ref{lem:scaling} a different
		normalization of $q_{0}$ multiplies it by a nonzero constant.
		Consequently $\operatorname{ord}f=2$ if and only if $a\neq0$.
	\end{proof}
	
	\subsection{The mixed-volume bound}
	
	For a Laurent polynomial $A\in\C[x^{\pm1},y^{\pm1}]$ let $\Newt A$
	denote the convex hull of its support, and for lattice polytopes
	$\Delta,\Delta'$ let
	\[
	\MV(\Delta,\Delta')
	=\operatorname{Area}(\Delta+\Delta')-\operatorname{Area}(\Delta)
	-\operatorname{Area}(\Delta')
	\]
	be the mixed volume, normalized so that Bernstein's bound is $\MV$
	itself.  We use three classical facts: $\MV$ is symmetric, nonnegative
	and additive in each argument with respect to Minkowski sums, and it is
	monotone, so that $\MV(\Delta,\Delta')\le\MV(\Theta,\Delta')$ whenever
	$\Delta\subseteq\Theta$ \cite[Ch.~5]{Schneider2014}; and
	$\Newt(AA')=\Newt A+\Newt A'$ \cite[Ch.~7]{CLO}.
	
	\begin{lemma}\label{lem:BKK}
		Let $A,B$ be nonzero Laurent polynomials in two variables and let
		$p\in(\C^{*})^{2}$.
		\begin{enumerate}[label=\rm(\roman*)]
			\item If $A$ and $B$ have no common nonconstant factor, then
			$V(A)\cap V(B)\cap(\C^{*})^{2}$ is finite and
			$\sum_{z}\mu_{z}(A,B)\le\MV(\Newt A,\Newt B)$.
			\item If $p$ is a common zero of $A$ and $B$ and their greatest common
			divisor $h$ satisfies $h(p)\neq0$, then $p$ is isolated in
			$V(A)\cap V(B)$ and $\mu_{p}(A,B)\le\MV(\Newt A,\Newt B)$.
		\end{enumerate}
	\end{lemma}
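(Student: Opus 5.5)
For part~(i) I will reduce to the classical Bernstein--Kushnirenko theorem, in the form that counts isolated solutions in $(\C^{*})^{2}$ with multiplicity, together with the conservation of local multiplicity under perturbation. Fix the supports $S_{A}=\operatorname{supp}A$ and $S_{B}=\operatorname{supp}B$. Since $A,B$ have no common nonconstant factor in the UFD $\C[x^{\pm1},y^{\pm1}]$, the intersection $V(A)\cap V(B)\cap(\C^{*})^{2}$ is finite. This is the standard fact that two coprime plane curves meet in finitely many points: after multiplying by monomials, both become coprime polynomials in $\C[x,y]$ not divisible by $x$ or $y$, and the resultant with respect to $y$ is a nonzero polynomial in $x$. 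Let $z_{1},\dots,z_{k}$ be these points and choose disjoint small balls $U_{j}\ni z_{j}$ inside the torus. Perturb the coefficients of $A$ and $B$ to generic $A_{\varepsilon},B_{\varepsilon}$ with the same supports. Bernstein's theorem then gives exactly $\MV(\Newt A,\Newt B)$ solutions in the torus, all simple.

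The key step is conservation of number: for $\varepsilon$ small, the number of zeros of $(A_{\varepsilon},B_{\varepsilon})$ in $U_{j}$, counted with multiplicity, equals $\mu_{z_{j}}(A,B)$. This is the classical continuity of the local intersection multiplicity, or of the local degree of the holomorphic map $(A,B)\colon U_{j}\to\C^{2}$, which equals $\mu_{z_{j}}$ and is stable under small perturbations that leave no zeros on $\partial U_{j}$. Summing over $j$ bounds $\sum_{j}\mu_{z_{j}}(A,B)$ by the number of torus zeros of the perturbed system, hence by $\MV$. Alternatively, one can cite directly the version of Bernstein's theorem asserting that the number of isolated torus solutions, counted with multiplicity, is at most $\MV$ for arbitrary coefficients \cite{Bernstein1975}. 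I will give the perturbation argument and cite this version as backup.

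For part~(ii), write $A=hA'$ and $B=hB'$ with $A',B'$ coprime. Since $h(p)\neq0$, $h$ is a unit in the local ring at $p$, so the ideals $(A,B)$ and $(A',B')$ coincide there and $\mu_{p}(A,B)=\mu_{p}(A',B')$. Moreover, near $p$ we have $V(A)\cap V(B)=V(A')\cap V(B')$, which is finite by part~(i), so $p$ is isolated. Part~(i) then gives $\mu_{p}(A',B')\le\MV(\Newt A',\Newt B')$. Finally, $\Newt A=\Newt h+\Newt A'$, and by additivity and nonnegativity
\[
\MV(\Newt A,\Newt B)=\MV(\Newt A',\Newt B')+\MV(\Newt h,\Newt B')+\MV(\Newt A,\Newt h)\ge\MV(\Newt A',\Newt B').
\]
Here I first expand in the first argument, then expand $\MV(\Newt A',\Newt h+\Newt B')$. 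Alternatively, monotonicity applied to a translate of $\Newt A'$ contained in $\Newt A$ gives the same inequality.

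I expect the main obstacle to be the conservation-of-multiplicity step in~(i). One must make sure the generic perturbation stays in the same supports, so that Bernstein's count still applies. One must also check that zeros near the $z_{j}$ do not escape to the axes or to infinity; this is automatic because the $U_{j}$ are compact subsets of the torus. Zeros of the perturbed system elsewhere only help the inequality.
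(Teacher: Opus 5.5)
Your proposal is correct and follows the paper's proof essentially step for step. For (i) you use a generic perturbation of the coefficients within the fixed supports, Bernstein's count of $\MV(\Newt A,\Newt B)$ simple torus solutions, and conservation of local multiplicity at each isolated zero; for (ii) you factor out the gcd $h$, which is a unit at $p$, and drop the nonnegative extra terms in the bilinear expansion of the mixed volume. Your additions (the resultant argument for finiteness, disjoint balls kept inside the torus, and the alternative via monotonicity applied to a translate of $\Newt A'$ inside $\Newt A$) are sound refinements of that same argument, not a different route.
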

	
	\begin{proof}
		(i) By the theorem of Bernstein and Kushnirenko
		\cite{Bernstein1975,Kushnirenko1976}, see also \cite[Sect.~7.5]{CLO}, a
		system with the given supports and generic coefficients has exactly
		$\MV(\Newt A,\Newt B)$ solutions in $(\C^{*})^{2}$, all simple.
		Perturbing the coefficients of $A$ and $B$ within their supports, the
		Newton polytopes are unchanged and, by conservation of the local
		intersection multiplicity under deformation
		\cite[Ch.~12]{Fulton1998}, every isolated zero $z$ of $(A,B)$ in the
		torus splits into
		exactly $\mu_{z}(A,B)$ zeros of the perturbed system in a small
		neighbourhood of $z$.  Since $A$ and $B$ have no common factor, all
		zeros are isolated and finite in number, and the bound follows.
		
		(ii) Write $A=hA'$, $B=hB'$ with $A',B'$ coprime.  Near $p$ the
		function $h$ is a unit, so $\mu_{p}(A,B)=\mu_{p}(A',B')$ and $p$ is
		isolated by (i).  By $\Newt A=\Newt h+\Newt A'$ and the additivity and
		nonnegativity of the mixed volume,
		\begin{multline*}
			\MV(\Newt A,\Newt B)
			=\MV(\Newt h,\Newt h)+\MV(\Newt h,\Newt B')\\
			+\MV(\Newt A',\Newt h)+\MV(\Newt A',\Newt B')
			\ \geq\ \MV(\Newt A',\Newt B') ,
		\end{multline*}
		and (i) applied to $(A',B')$ concludes.
	\end{proof}
	
	\begin{remark}\label{rem:square}
		Only solutions in the torus are counted, so solutions on the axes and
		at infinity impose no constraint.  For the case used in
		Section~\ref{sec:family}, namely $\Newt A=\Newt B=[0,1]^{2}$, one gets
		$\MV=\operatorname{Area}([0,2]^{2})-2=2$, a value that can also be
		obtained by elementary means: the projective closures of two
		affine-bilinear curves $\alpha_{1}+\alpha_{2}x+\alpha_{3}y+\alpha_{4}xy$
		are conics through $[1:0:0]$ and $[0:1:0]$, so B\'ezout's four
		intersection points leave at most two in the affine plane
		\cite[Ch.~5]{Fulton}.
	\end{remark}
	
	\subsection{Theorem C}
	
	\begin{theorem}[Theorem C]\label{thm:C}
		Let $A,B,g_{1},g_{2}$ be Laurent polynomials in two variables and let
		\[
		X=\Bigl(x\,\frac{A}{g_{1}},\;y\,\frac{B}{g_{2}}\Bigr).
		\]
		Let $p\in(\C^{*})^{2}\cap\R^{2}$ satisfy $g_{1}(p)g_{2}(p)\neq0$,
		$X(p)=0$, and let $DX(p)$ be nilpotent and nonzero.  Then:
		\begin{enumerate}[label=\rm(\roman*)]
			\item if $A$ and $B$ have a common nonconstant factor vanishing at $p$,
			then $p$ is not an isolated equilibrium and $a=0$;
			\item otherwise $p$ is isolated and
			\[
			\operatorname{ord}f=\mu_{p}(X)=\mu_{p}(A,B)
			\le\MV(\Newt A,\Newt B).
			\]
		\end{enumerate}
	\end{theorem}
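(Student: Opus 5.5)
The plan is to reduce everything to the local algebra of the pair $(A,B)$ at $p$. Since $p\in(\C^{*})^{2}$ and $g_{1}(p)g_{2}(p)\neq0$, the functions $x/g_{1}$ and $y/g_{2}$ are analytic near $p$ and nonvanishing at $p$, hence units of the local ring $\C[[x-x_{0},y-y_{0}]]$. Therefore the ideals $(X_{1},X_{2})$ and $(A,B)$ coincide in that ring, and the zero sets of $X$ and of $(A,B)$ coincide near $p$. This single observation drives both parts.

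For (i), let $h$ be a common nonconstant factor of $A$ and $B$ with $h(p)=0$; we may take $h$ irreducible. Then $X=h\,Z$ with $Z=(xA'/g_{1},\,yB'/g_{2})$, and $\{h=0\}$ consists of equilibria near $p$. To see that $p$ is not isolated, note that an irreducible Laurent polynomial vanishing at a point of the torus has a zero set that is a complex curve through $p$; hence it contains points of $(\C^{*})^{2}$ arbitrarily close to $p$, all of which are zeros of $X$. This gives non-isolation over $\C$, which is the sense in which $\mu_{p}$ is used. If one wants real non-isolation, more care is needed, since a real point of a complex curve could be isolated in its real locus.

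For $a=0$, we would like to invoke Proposition~\ref{prop:product}, but its hypotheses $\nabla h(p)\neq0$ and $Z(p)\neq0$ need not hold. We therefore argue directly, as in its proof. On $\{h=0\}$ we have $DX=Z\,\nabla h^{\top}$, which has rank at most one, so $\dt DX\equiv0$ on $V(h)$, and this holds also at singular points, where $DX=0$ there. We need $q_{0}$ to be tangent to $V(h)$ at $p$ in the Zariski sense, that is, $\ip{\nabla\dt DX(p)}{q_{0}}=0$. It is cleaner to use the multiplicity instead. If $a\neq0$, Lemma~\ref{lem:mult} in formal form, through the Takens reduction, would give $\operatorname{ord}f=2$, which is finite, so $\dim\C[[\cdot]]/(X_{1},X_{2})<\infty$ and $p$ would be isolated in the complex zero set. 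That contradicts what we just showed. Hence $a=0$.

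For (ii), write $d=\gcd(A,B)$. By hypothesis $d(p)\neq0$, since any nonconstant factor vanishing at $p$ would fall under (i). Lemma~\ref{lem:BKK}(ii) then gives that $p$ is isolated in $V(A)\cap V(B)$ with $\mu_{p}(A,B)\le\MV(\Newt A,\Newt B)$. By the unit observation, $\mu_{p}(X)=\mu_{p}(A,B)$, and $p$ is isolated as a zero of $X$. Finally, Lemma~\ref{lem:mult} applies: $X$ is real analytic near the real point $p$, and $DX(p)$ is nilpotent and nonzero. It gives $\operatorname{ord}f=\mu_{p}(X)$, which completes the chain.

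The main obstacle is part (i). We must justify $a=0$ without the smoothness hypotheses of Proposition~\ref{prop:product}, and we must be precise about the sense of "not isolated." Routing the argument through finiteness of the local algebra, rather than through tangency of $q_{0}$ to $V(h)$, avoids both issues.
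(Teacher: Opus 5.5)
\textbf{Real non-isolation in part (i).} Your part (ii), and the unit observation behind it, follow the paper's argument. The gap is in part (i). It comes from your claim that the hypotheses $\nabla h(p)\neq0$ and $Z(p)\neq0$ of Proposition~\ref{prop:product} need not hold. They always hold here. Since $h(p)=0$, you have $DX(p)=h(p)\,DZ(p)+Z(p)\nabla h(p)^{\top}=Z(p)\nabla h(p)^{\top}$. Because $DX(p)\neq0$ by hypothesis, neither factor can vanish.

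This is the step the paper uses. Proposition~\ref{prop:product} then gives $a=0$ at once, and $\nabla h(p)\neq0$ lets the implicit function theorem produce a curve of equilibria through $p$. Having missed it, you prove non-isolation only in the complex zero set and explicitly leave the real case open. But ``$p$ is not an isolated equilibrium'' concerns the real vector field, and that is how it is used later: Corollary~\ref{cor:cusp}(i) says $p$ lies on a curve of equilibria, and Corollary~\ref{cor:equiv}(iii) relies on the same reading.

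To close the gap, assume $A,B$ have real coefficients. If your irreducible $h$ were not a constant multiple of a real Laurent polynomial, then $\bar h$ would be a second, non-associate common factor. It also vanishes at the real point $p$. So $h\bar h$ would divide both $A$ and $B$, which forces $DX(p)=0$, a contradiction. Hence $h$ may be taken real. The real implicit function theorem, with $\nabla h(p)\neq0$, then gives a real arc of equilibria through $p$.

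\textbf{Your route to $a=0$.} Your argument by contradiction through the local algebra is sound and differs from the paper's. If $a\neq0$, the $u^{2}$-coefficient of $f$ is nonzero; this identification via Lemma~\ref{lem:quadratic} does not use isolation. Hence $\mu_{p}(X)=\ordop f=2<\infty$, which is incompatible with $p$ lying on a complex curve of zeros.

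This route needs neither Proposition~\ref{prop:product} nor any smoothness of $V(h)$. It is the mechanism behind Remark~\ref{rem:whatisatsigma0}, where $f\equiv0$. The cost is that it rests on the formal Takens reduction, whereas the paper's route uses only the $2$-jet.

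You can also shorten it. In $\C[[x-x_{0},y-y_{0}]]$ one has $(X_{1},X_{2})=(A,B)\subseteq(h)$, with $h$ a nonzero non-unit. Hence $\mu_{p}(X)\ge\dim_{\C}\C[[x-x_{0},y-y_{0}]]/(h)=\infty$ directly, without using the geometry of $V(h)$.
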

	
	\begin{proof}
		Since $g_{1}(p)g_{2}(p)\neq0$ and $p$ lies in the torus, the functions
		$x,y,g_{1},g_{2}$ are units in the local ring at $p$, whence
		$(X_{1},X_{2})_{p}=(A,B)_{p}$ and $\mu_{p}(X)=\mu_{p}(A,B)$.  Note that
		this uses only that the $g_{i}$ are units at $p$; no common factor is
		required, and the auxiliary system $(A,B)$ need not be orbitally
		equivalent to $X$.
		
		(i) Let $h$ be such a factor and write $A=h\widetilde A$,
		$B=h\widetilde B$, so that
		$X=h\,Z$ with $Z=(x\widetilde A/g_{1},\,y\widetilde B/g_{2})$.  Since
		$DX(p)=Z(p)\,\nabla h(p)^{\top}$ has rank one, both $Z(p)$ and
		$\nabla h(p)$ are nonzero; Proposition~\ref{prop:product} gives $a=0$,
		and the implicit function theorem makes $\{h=0\}$ a curve of equilibria
		through $p$.
		
		(ii) By Lemma~\ref{lem:BKK}, $p$ is isolated and
		$\mu_{p}(A,B)\le\MV(\Newt A,\Newt B)$; Lemma~\ref{lem:mult} identifies
		$\mu_{p}(X)$ with $\operatorname{ord}f$.
	\end{proof}
	
	\begin{corollary}\label{cor:cusp}
		Assume the hypotheses of Theorem~\ref{thm:C} and write
		$m=\operatorname{ord}f$, $n=\operatorname{ord}g$ for the orders of the
		Takens form at $p$.
		\begin{enumerate}[label=\rm(\roman*)]
			\item If $\MV(\Newt A,\Newt B)\le1$, then $p$ is not isolated:
			equivalently, $A$ and $B$ have a common nonconstant factor vanishing at
			$p$, and $a=0$, so that $p$ lies on a curve of equilibria.
			Contrapositively, an isolated equilibrium in $(\C^{*})^{2}$ with
			nilpotent nonzero Jacobian forces $\MV(\Newt A,\Newt B)\ge2$.  The
			qualification is essential: see Remark~\ref{rem:notvacuous}.
			\item If $\MV(\Newt A,\Newt B)=2$, then either $p$ is not isolated, or
			$m=2$ and $a\neq0$; in the latter case the germ of $X$ at $p$ is
			topologically a cusp, for every admissible value of $n$.
		\end{enumerate}
		Consequently, if $\MV(\Newt A,\Newt B)\le2$ then no isolated interior
		equilibrium can be a nilpotent singularity of saddle, focus or elliptic
		type: all of these have $m\ge3$, and they are the codimension-three
		cases unfolded in \cite{DRSZ1991}.  The bound constrains $m$ alone and
		imposes no restriction on $n$, so the vanishing of $b$, and more
		generally degeneracies of higher order in $g$, remain compatible with
		$\MV\le2$; both models of Section~\ref{sec:models} satisfy
		$\MV\le2$ and possess a point at which $b=0$.
	\end{corollary}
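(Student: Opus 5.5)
The plan is to read everything off the dichotomy of Theorem~\ref{thm:C} together with Lemma~\ref{lem:mult}. Theorem~\ref{thm:C} splits into two cases. Either $A$ and $B$ have a common nonconstant factor vanishing at $p$, and then $p$ lies on a curve of equilibria with $a=0$. Or they have no such factor, and then $p$ is isolated with
\[
2\le m=\mu_{p}(X)=\mu_{p}(A,B)\le\MV(\Newt A,\Newt B),
\]
where the lower bound $m\ge2$ comes from Lemma~\ref{lem:mult}.

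For (i), if $\MV\le1$ the second alternative would give $2\le m\le1$, which is impossible. So we are in the first alternative: a common factor $h$ with $h(p)=0$, $a=0$, and the curve $\{h=0\}$ of equilibria through $p$. The equivalence "not isolated $\iff$ common factor vanishing at $p$" also follows from Theorem~\ref{thm:C}. One direction is part (i) of the theorem. For the other, if no common factor vanishes at $p$, then either $\gcd(A,B)$ is a unit at $p$ and Lemma~\ref{lem:BKK}(ii) makes $p$ isolated, or the theorem's part (ii) applies. The contrapositive is then immediate.

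For (ii), with $\MV=2$ and $p$ isolated, the chain gives $2\le m\le 2$, so $m=2$. Lemma~\ref{lem:mult} then gives $a\neq0$. The topological type follows from the Dumortier classification of nilpotent singularities \cite{Dumortier1977,DLA2006}. With $m=2$ (even), the germ is a cusp whatever $n$ is, including $g\equiv0$. The key point is that $m=2$ makes the Takens form $\dot u=v$, $\dot v=a u^{2}+\dots$, and this is topologically equivalent to $\dot u=v$, $\dot v=u^{2}$ regardless of $g$. For a real analytic germ this is part of the standard classification: for $m$ even the singularity is a cusp, and $n$ only affects finer properties. The one point to check is that the Takens form of a real analytic $X$ exists formally and that the topological classification applies to the analytic germ. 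This is exactly the setting of \cite{Dumortier1977}, so I would cite it rather than reprove it.

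For the closing statements, the saddle, focus and elliptic nilpotent types all require $m$ odd and at least $3$ in Dumortier's list. They are therefore excluded by (i) and (ii) at isolated torus equilibria. The claim that the bound does not restrict $n$ is a remark, not something to prove. It is supported by the models of Section~\ref{sec:models}, which I would cite forward as the realizing examples.

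I expect the only genuine obstacle to be the precise invocation of the topological classification for $m=2$ across every value of $n$. Everything else is arithmetic on the inequality chain.
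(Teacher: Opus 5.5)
Your argument is the paper's argument. You combine the dichotomy of Theorem~\ref{thm:C} with the lower bound $m=\mu_p(X)\ge2$ of Lemma~\ref{lem:mult}. This gives (i) by contradiction and $m=2$, $a\neq0$ in (ii). The closing assertions then follow because saddle, focus and elliptic types require $m$ odd, hence $m\ge3$.

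The step you flagged as the crux is misstated, though. The classification does \emph{not} say that a nilpotent singularity with $m$ even is a cusp whatever $n$ is. For $m$ even and $g\not\equiv0$ it is a cusp when $m<2n+1$ and a saddle-node when $m>2n+1$. For instance, $\dot u=v$, $\dot v=u^{4}+uv$ has $m=4$, $n=1$ and is a saddle-node. So the claim that ``$n$ only affects finer properties'' is false in general.

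Your conclusion ``cusp regardless of $g$'' is correct for $m=2$, but only because of a fact you never state. Nilpotency of $DX(p)$ forces $g(0)=\tr DX(p)=0$, that is $n\ge1$. Hence $m=2<2n+1$ for every admissible $n$, and $g\equiv0$ is the separate, also cuspidal, case. The paper argues exactly this way, writing the condition as $2n\ge m$.

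Equivalently, in quasi-homogeneous weights the term $u^{n}v$ has strictly larger weight than $u^{2}$ precisely when $n\ge1$. So the principal part is the Hamiltonian cusp $\dot u=v$, $\dot v=a u^{2}$. A term with $n=0$, i.e.\ $g(0)\neq0$, would dominate instead and produce a saddle-node.

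With this correction inserted the proposal is complete.
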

	
	\begin{proof}
		At an isolated equilibrium with nilpotent nonzero Jacobian
		Lemma~\ref{lem:mult} gives $m=\mu_{p}(X)\ge2$, while
		Theorem~\ref{thm:C}(ii) gives $m\le\MV(\Newt A,\Newt B)$.
		
		For~(i), suppose $p$ were isolated.  Then Theorem~\ref{thm:C}(ii)
		applies and yields $m\le\MV\le1$, contradicting $m\ge2$.  Hence the
		alternative of Theorem~\ref{thm:C}(i) holds: $A$ and $B$ share a
		nonconstant factor vanishing at $p$, the point is not isolated and
		$a=0$.  For~(ii), if $\MV=2$ the two inequalities force $m=2$, hence
		$a\neq0$ by Lemma~\ref{lem:mult}.
		
		For the topological type, recall that a nilpotent singularity with $m$
		even and $2n\ge m$ is a cusp \cite[Ch.~3]{DLA2006}, \cite{Dumortier1977}.
		Nilpotency of $DX(p)$ forces $g(0)=0$, that is $n\ge1$, so for $m=2$
		the inequality $2n\ge m$ holds for every admissible $n$.  Equivalently,
		in the quasi-homogeneous weights determined by $\dot u=v$ and by
		$a_{2}u^{2}$, the term $b_{n}u^{n}v$ has strictly larger weight for
		every $n\ge1$, and the principal part is the Hamiltonian field
		$\dot u=v$, $\dot v=a_{2}u^{2}$, whose origin is a cusp.  The saddle,
		focus and elliptic types occur only for $m\ge3$, which gives the last
		assertion.
	\end{proof}
	
	\begin{remark}\label{rem:notvacuous}
		The qualification in Corollary~\ref{cor:cusp}(i) is not vacuous: the
		excluded points may exist, and only their isolation fails.  Take
		\[
		A=(x-1)(y-1),\qquad B=x-1,\qquad g_{1}=g_{2}=1,
		\]
		and $X=(xA,yB)$.  At $p=(1,1)$,
		\[
		DX(p)=\begin{pmatrix}0&0\\1&0\end{pmatrix},
		\]
		which is nilpotent of rank one and nonzero, while $\Newt A$ is the unit
		square, $\Newt B$ the unit segment, and
		$\MV(\Newt A,\Newt B)=1$.  The point is interior and does occur; what
		fails is isolation, the whole line $\{x=1\}$ consisting of equilibria,
		in accordance with the common factor $x-1$ of $A$ and $B$.  Consistently
		with Theorem~\ref{thm:A}, here $q_{0}=(0,1)$, $a=0$ and $b=1$.
	\end{remark}
	
	\begin{remark}\label{rem:interior}
		Theorem~\ref{thm:C} and Corollary~\ref{cor:cusp} are statements about
		interior equilibria, and this is the natural range of the argument
		rather than a restriction adopted for convenience.  Two independent
		reasons place the coordinate axes outside it.  First, for a Kolmogorov
		system the axes are invariant for every value of the parameters, so a
		nilpotent singularity lying on an axis must be unfolded within the
		class of vector fields preserving that line, and the classification
		invoked in Corollary~\ref{cor:cusp} is not the relevant one there.
		Second, the multiplicity argument itself ceases to apply: the
		identification $\mu_{p}(X)=\mu_{p}(A,B)$ used in the proof of
		Theorem~\ref{thm:C} requires $x$ and $y$ to be units in the local ring
		at $p$, which fails as soon as one coordinate vanishes; at a point
		$(x_{0},0)$ with $x_{0}\neq0$ one would have to control
		$\dim_{\C}\C[[x,y]]/(A,\,yB)$ instead.  Consistently with this, the
		bound of Lemma~\ref{lem:BKK} is blind to the axes by construction,
		since Bernstein's theorem counts solutions lying in $(\C^{*})^{2}$
		only, and extending it to the affine plane calls for a different
		combinatorial invariant.  Every statement of this section therefore
		concerns equilibria with $x_{0}y_{0}\neq0$.
	\end{remark}

	\section{The class with cross-product cubic terms}
	\label{sec:family}
	
	\subsection{The class and the reduction of rational models}
	
	We consider the planar polynomial systems
	\begin{equation}\label{eq:family}
		\dot x = x\,P(x,y),\qquad
		\dot y = y\,Q(x,y),
	\end{equation}
	with
	\begin{equation}\label{eq:PQ}
		P = -\alpha_{1}+\alpha_{2}x+\alpha_{3}y+\alpha_{4}xy,
		\qquad
		Q = -\beta_{1}+\beta_{3}x+\beta_{2}y+\beta_{4}xy .
	\end{equation}
	Expanded, \eqref{eq:family} is a cubic system whose cubic terms are
	exclusively the cross-products $x^{2}y$ and $xy^{2}$.  Since
	$\Newt P,\Newt Q\subseteq[0,1]^{2}$, so by monotonicity of the mixed
	volume and Remark~\ref{rem:square},
	$\MV(\Newt P,\Newt Q)\le2$; equality of the polytopes with the full
	square requires all four corner coefficients of each polynomial to be
	nonzero, which is not assumed.  If in addition the nilpotent
	equilibrium is isolated, Corollary~\ref{cor:cusp}(i) forces
	$\MV(\Newt P,\Newt Q)\ge2$ and hence $\MV(\Newt P,\Newt Q)=2$; the
	equality is a consequence of isolation, not of membership in the class.
	In either case Theorem~\ref{thm:C} applies.
	
	\begin{remark}\label{rem:origin}
		Predator--prey models with rational interaction terms reduce to this
		setting, but two different reductions are involved and they must not be
		confused.
		
		Let $\dot x=x\widetilde P$, $\dot y=y\widetilde Q$ with
		$\widetilde P=A/g_{1}$ and $\widetilde Q=B/g_{2}$ rational, and let $p$
		be an equilibrium in the open quadrant with $g_{1}(p)g_{2}(p)\neq0$.
		\begin{enumerate}[label=\rm(\alph*)]
			\item \emph{Separate clearing.}  For the local multiplicity, and hence
			for Theorem~\ref{thm:C}, each denominator may be cleared
			independently, because $g_{1}$ and $g_{2}$ are units at $p$; the
			relevant Newton polytopes are those of $A$ and $B$.  The auxiliary
			system $(xA,yB)$ is \emph{not} orbitally equivalent to the original one
			when $g_{1}\neq g_{2}$, and must not be used to compute $a$ or $b$.
			\item \emph{Common clearing.}  For the coefficients $a$ and $b$
			themselves one needs a common factor.  Multiplying the field by
			$g=g_{1}=g_{2}$ amounts to the time rescaling
			\[
			\frac{dt}{d\tau}=g(x,y),
			\qquad\text{equivalently}\qquad
			d\tau=\frac{dt}{g(x,y)}
			\]
			along an orbit, since $dx/d\tau=(dx/dt)(dt/d\tau)=g\,X$.  By
			Corollary~\ref{cor:orbital} this replaces $(a,b)$ by
			$(g(p)^{2}a,\,g(p)b)$, so all vanishing statements are unaffected.  Only when $g\widetilde P$ and
			$g\widetilde Q$ are affine-bilinear does the system itself belong to
			\eqref{eq:family}--\eqref{eq:PQ}, and only then do the closed formulas
			of Subsection~\ref{ss:closed} apply.
		\end{enumerate}
		Holling type~II systems with satiation \cite{HollingTypeII,Bazykin1998}
		and Beddington--DeAngelis systems with mutual interference
		\cite{BeddingtonDeAngelis,DeAngelisGoldsteinONeill} satisfy (a); as
		Section~\ref{sec:models} shows, one of them satisfies (b) and the other
		does not.
	\end{remark}
	
	\subsection{Bogdanov--Takens points and the parametrization}
	
	Let $(x_{0},y_{0})$ be an equilibrium of \eqref{eq:family} with
	$x_{0}y_{0}\neq0$, so that $P(x_{0},y_{0})=Q(x_{0},y_{0})=0$ and
	\begin{equation}\label{eq:Jfactor}
		J(x_{0},y_{0})
		=\begin{pmatrix}x_{0}&0\\0&y_{0}\end{pmatrix}M,\qquad
		M:=\begin{pmatrix}P_{x}&P_{y}\\ Q_{x}&Q_{y}\end{pmatrix}(x_{0},y_{0}) .
	\end{equation}
	
	\begin{proposition}\label{prop:tangency}
		For such an equilibrium $\dt J=x_{0}y_{0}\dt M$.  Hence $\dt J=0$ if
		and only if $\nabla P$ and $\nabla Q$ are linearly dependent at
		$(x_{0},y_{0})$; when moreover $\nabla P\neq0$ and $\nabla Q\neq0$,
		this is the tangency of the nullclines $\{P=0\}$ and $\{Q=0\}$ at that
		point.  The Bogdanov--Takens conditions read
		$P=Q=0$, $\dt M=0$, $x_{0}P_{x}+y_{0}Q_{y}=0$.
	\end{proposition}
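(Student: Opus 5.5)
The plan is to compute the Jacobian directly and then read off the three claims in turn.

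\emph{Step 1 (the factorization).} Differentiate $X=(xP,yQ)$ by the product rule:
\[
DX=\begin{pmatrix}P+xP_{x}&xP_{y}\\ yQ_{x}&Q+yQ_{y}\end{pmatrix}.
\]
At an interior equilibrium we have $P=Q=0$, so the undifferentiated terms drop out. What remains is exactly $\operatorname{diag}(x_{0},y_{0})\,M$, which is \eqref{eq:Jfactor}. Multiplicativity of the determinant then gives $\dt J=x_{0}y_{0}\dt M$.

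\emph{Step 2 (the fold condition).} Since $x_{0}y_{0}\neq0$, we get $\dt J=0$ if and only if $\dt M=0$. The rows of $M$ are $\nabla P^{\top}$ and $\nabla Q^{\top}$ evaluated at $(x_{0},y_{0})$, so $\dt M=0$ exactly when these two gradients are linearly dependent. If both gradients are nonzero, the implicit function theorem makes $\{P=0\}$ and $\{Q=0\}$ smooth curves near the point. Their tangent lines are $\ker\nabla P^{\top}$ and $\ker\nabla Q^{\top}$, and these lines coincide precisely when the gradients are proportional. That is the tangency of the nullclines.

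\emph{Step 3 (the Bogdanov--Takens conditions).} A BT point needs $X=0$, $\dt J=0$ and $\tr J=0$. Interior equilibrium means $P=Q=0$. Step 2 converts $\dt J=0$ into $\dt M=0$. From the displayed matrix with $P=Q=0$, the trace is $\tr J=x_{0}P_{x}+y_{0}Q_{y}$. These are the four listed conditions.

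There is one point to address. A nilpotent $J$ has $\tr J=\dt J=0$. Conversely, if $\tr J=\dt J=0$, the characteristic polynomial is $\lambda^{2}$, so Cayley--Hamilton gives $J^{2}=0$, i.e.\ $J$ is nilpotent. The rank-one requirement $J\neq0$ is the standing assumption of the BT setting, so I would state it as implicit rather than as a new condition. No step here is a real obstacle; the only care needed is to use $P=Q=0$ before factoring.
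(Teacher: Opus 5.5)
Your proposal is correct and follows the paper's own reasoning: the paper gives no separate proof beyond the factorization \eqref{eq:Jfactor}, $J=\operatorname{diag}(x_{0},y_{0})\,M$ at an interior equilibrium, from which $\dt J=x_{0}y_{0}\dt M$, the gradient-dependence and tangency reading, and $\tr J=x_{0}P_{x}+y_{0}Q_{y}$ follow directly, just as in your three steps. Your remark that the rank-one condition $J\neq0$ is a standing assumption, and not one of the listed equations, matches how the paper uses the statement.
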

	
	Solving these for $(\alpha_{1},\beta_{1},\alpha_{4},\beta_{4})$ yields
	\begin{equation}\label{eq:param}
		\begin{aligned}
			\alpha_{1} &= \frac{-y_{0}^{2}\alpha_{3}^{2}
				+ x_{0}y_{0}\alpha_{2}(\alpha_{3}-\beta_{2})
				+ x_{0}^{2}\alpha_{2}\beta_{3}}{\Delta},
			&
			\beta_{1} &= \frac{-y_{0}^{2}\alpha_{3}\beta_{2}
				+ x_{0}y_{0}\beta_{2}(\alpha_{2}-\beta_{3})
				+ x_{0}^{2}\beta_{3}^{2}}{\Delta},
			\\[4pt]
			\alpha_{4} &= \frac{-x_{0}^{2}\alpha_{2}^{2}
				+ y_{0}^{2}\alpha_{3}\beta_{2}
				+ x_{0}y_{0}\alpha_{3}(\alpha_{2}-\beta_{3})}{x_{0}y_{0}\Delta},
			&
			\beta_{4} &= \frac{y_{0}^{2}\beta_{2}^{2}
				- x_{0}\bigl(x_{0}\alpha_{2}+y_{0}(\beta_{2}-\alpha_{3})\bigr)\beta_{3}}
			{x_{0}y_{0}\Delta},
		\end{aligned}
	\end{equation}
	where $\Delta:=-y_{0}(\alpha_{3}+\beta_{2})+x_{0}(\alpha_{2}+\beta_{3})$
	is assumed nonzero.  Setting
	\begin{equation}\label{eq:sigmatau}
		\sigma:=x_{0}\alpha_{2}-y_{0}\alpha_{3},
		\qquad
		\tau:=-y_{0}\beta_{2}+x_{0}\beta_{3},
	\end{equation}
	one has the identity
	\begin{equation}\label{eq:Delta}
		\Delta=\sigma+\tau .
	\end{equation}
	
	\begin{proposition}\label{prop:A0}
		Under \eqref{eq:param},
		\begin{equation}\label{eq:A0}
			A_{0}:=J(x_{0},y_{0})=\frac{1}{\Delta}
			\begin{pmatrix}
				\sigma\tau & -\dfrac{x_{0}\sigma^{2}}{y_{0}}\\[8pt]
				\dfrac{y_{0}\tau^{2}}{x_{0}} & -\sigma\tau
			\end{pmatrix},
		\end{equation}
		so $\tr A_{0}=\dt A_{0}=0$, and $\rank A_{0}=1$ whenever $\tau\neq0$,
		with
		\begin{equation}\label{eq:q0}
			q_{0}=\Bigl(\frac{x_{0}\sigma}{y_{0}\tau},\;1\Bigr)^{\!\top}
			\in\ker A_{0}.
		\end{equation}
		Moreover
		\begin{equation}\label{eq:alpha4}
			\alpha_{4}=-\frac{\alpha_{3}}{x_{0}}
			-\frac{\sigma^{2}}{x_{0}y_{0}(\sigma+\tau)} .
		\end{equation}
	\end{proposition}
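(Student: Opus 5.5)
The plan is to compute $M$ entrywise at $(x_{0},y_{0})$ and then read off $J=\operatorname{diag}(x_{0},y_{0})M$ from \eqref{eq:Jfactor}. From \eqref{eq:PQ},
\[
P_{x}=\alpha_{2}+\alpha_{4}y_{0},\quad P_{y}=\alpha_{3}+\alpha_{4}x_{0},\quad
Q_{x}=\beta_{3}+\beta_{4}y_{0},\quad Q_{y}=\beta_{2}+\beta_{4}x_{0}.
\]
The first step is to establish \eqref{eq:alpha4}. I will do this by rewriting the numerator of $\alpha_{4}$ in \eqref{eq:param} in terms of $\sigma$ and $\tau$. Concretely, I will show that
\[
-x_{0}^{2}\alpha_{2}^{2}+y_{0}^{2}\alpha_{3}\beta_{2}+x_{0}y_{0}\alpha_{3}(\alpha_{2}-\beta_{3})
=-\alpha_{3}y_{0}\Delta-\sigma^{2}.
\]
Using $\Delta=\sigma+\tau$ from \eqref{eq:Delta}, this is a direct polynomial identity: expand $\sigma^{2}=x_{0}^{2}\alpha_{2}^{2}-2x_{0}y_{0}\alpha_{2}\alpha_{3}+y_{0}^{2}\alpha_{3}^{2}$ and $y_{0}\alpha_{3}\Delta$, then compare coefficients. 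Dividing by $x_{0}y_{0}\Delta$ then gives \eqref{eq:alpha4}.

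The analogous identity for $\beta_{4}$ should read $\beta_{4}=-\beta_{3}/y_{0}+\tau^{2}/(x_{0}y_{0}\Delta)$. I would verify it the same way, since $\beta_{4}$'s numerator should equal $-x_{0}\beta_{3}\Delta+\tau^{2}$.

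With these two formulas the entries of $J$ fall out cleanly:
\[
x_{0}P_{y}=x_{0}\alpha_{3}+x_{0}^{2}\alpha_{4}=-\tfrac{x_{0}\sigma^{2}}{y_{0}\Delta},
\qquad
y_{0}Q_{x}=\tfrac{y_{0}\tau^{2}}{x_{0}\Delta}.
\]
For the diagonal, $x_{0}P_{x}=x_{0}\alpha_{2}-y_{0}\alpha_{3}-\sigma^{2}/\Delta=\sigma-\sigma^{2}/\Delta=\sigma\tau/\Delta$. Symmetrically, $y_{0}Q_{y}=y_{0}\beta_{2}-x_{0}\beta_{3}+\tau^{2}/\Delta=-\tau+\tau^{2}/\Delta=-\sigma\tau/\Delta$. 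This gives \eqref{eq:A0}.

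From \eqref{eq:A0} the trace vanishes visibly. The determinant is $\Delta^{-2}\bigl(-\sigma^{2}\tau^{2}+\sigma^{2}\tau^{2}\bigr)=0$. If $\tau\neq0$, the $(2,1)$ entry $y_{0}\tau^{2}/(x_{0}\Delta)$ is nonzero, so $A_{0}\neq0$; since $\dt A_{0}=0$, the rank is exactly one. Finally, $q_{0}$ is checked by applying the second row: $\frac{y_{0}\tau^{2}}{x_{0}}\cdot\frac{x_{0}\sigma}{y_{0}\tau}-\sigma\tau=0$. Because the rank is one, the first row is proportional to the second, so it also annihilates $q_{0}$.

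The only real obstacle is the bookkeeping in the two numerator identities for $\alpha_{4}$ and $\beta_{4}$, which are routine but error-prone expansions. Everything else is immediate once those are in the $\sigma,\tau$ form.
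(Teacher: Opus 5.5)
Your proposal is correct and is essentially the direct substitution the paper has in mind; the paper states the proposition without a written proof, relying on plugging \eqref{eq:param} into \eqref{eq:Jfactor} and on its symbolic check. Your intermediate identities $x_{0}y_{0}\Delta\,\alpha_{4}=-y_{0}\alpha_{3}\Delta-\sigma^{2}$ and $x_{0}y_{0}\Delta\,\beta_{4}=-x_{0}\beta_{3}\Delta+\tau^{2}$ are right, and the entries of $A_{0}$, the rank-one argument and the kernel check all follow as you describe. The only implicit step is that \eqref{eq:Jfactor} needs $P=Q=0$ at $(x_{0},y_{0})$; this is the standing hypothesis of the subsection, and it can also be checked from \eqref{eq:param} via $\Delta\alpha_{1}=x_{0}\alpha_{2}\Delta-\sigma^{2}$ and $\Delta\beta_{1}=y_{0}\beta_{2}\Delta+\tau^{2}$.
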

	
	\subsection{Closed form of \texorpdfstring{$a$}{a} and
		\texorpdfstring{$b$}{b}}\label{ss:closed}
	
	By Theorem~\ref{thm:A} the coefficients are obtained from
	\eqref{eq:q0} and the two functions $\tr J$ and $\dt J$ by two
	differentiations and the substitution \eqref{eq:param}.
	
	\begin{theorem}\label{thm:ab}
		Let $(x_{0},y_{0})$ with $x_{0}y_{0}\neq0$ be a Bogdanov--Takens point
		of \eqref{eq:family}, with $\Delta\neq0$ and $\tau\neq0$.  Then, for
		the normalization \eqref{eq:q0},
		\begin{equation}\label{eq:abclosed}
			a=\frac{\sigma\,(\beta_{2}\sigma-\alpha_{3}\tau)}{\sigma+\tau},
			\qquad
			b=-\alpha_{3}-\frac{x_{0}\beta_{3}}{y_{0}\tau}\,\sigma
			=-\,\frac{y_{0}\alpha_{3}\tau+x_{0}\beta_{3}\sigma}{y_{0}\tau} .
		\end{equation}
		In particular
		\begin{equation}\label{eq:zeros}
			a=0\iff
			\sigma=0\ \ \text{or}\ \ \beta_{2}\sigma=\alpha_{3}\tau ,
			\qquad
			b=0\iff x_{0}\beta_{3}\sigma=-y_{0}\alpha_{3}\tau .
		\end{equation}
	\end{theorem}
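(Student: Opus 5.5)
I will apply Theorem~\ref{thm:A} directly: compute $\tr J$ and $\dt J$ as functions of $(x,y)$ on the whole plane, take their gradients at $(x_{0},y_{0})$, pair with $q_{0}$ from \eqref{eq:q0}, and then simplify using \eqref{eq:param}.

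\textbf{Trace.} For \eqref{eq:family} the Jacobian is
\[
J=\begin{pmatrix}P+xP_{x}&xP_{y}\\ yQ_{x}&Q+yQ_{y}\end{pmatrix}.
\]
Hence $\tr J=P+Q+xP_{x}+yQ_{y}$. Since $P_{x}=\alpha_{2}+\alpha_{4}y$, $P_{y}=\alpha_{3}+\alpha_{4}x$, $Q_{x}=\beta_{3}+\beta_{4}y$, $Q_{y}=\beta_{2}+\beta_{4}x$, and $P_{xx}=Q_{yy}=0$, $P_{xy}=\alpha_{4}$, $Q_{xy}=\beta_{4}$, the gradient at $p$ is
\[
\nabla\tr J=\bigl(2P_{x}+Q_{x}+y_{0}\beta_{4},\;P_{y}+2Q_{y}+x_{0}\alpha_{4}\bigr),
\]
evaluated at $p$. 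Then $b=\ip{\nabla\tr J}{q_{0}}$ is a rational expression in $x_{0},y_{0},\alpha_{2},\alpha_{3},\beta_{2},\beta_{3}$ once \eqref{eq:param} is substituted. I will rewrite every entry in terms of $\sigma,\tau$ first: by \eqref{eq:A0}, $x_{0}P_{x}=\sigma\tau/\Delta$ and $x_{0}P_{y}=-x_{0}\sigma^{2}/(y_{0}\Delta)$, and similarly for $Q$. This should collapse $b$ to the stated form.

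\textbf{Determinant.} Using $\dt J=\dt(\operatorname{diag}(x,y)\widetilde M)$, where $\widetilde M$ is the full Jacobian divided by the diagonal, is awkward away from $p$. Instead I use $d(\dt)_{J}(L)=\tr(\operatorname{adj}(A_{0})L)$ as in the proof of Theorem~\ref{thm:A}, with $L=\partial_{q_{0}}J$. This gives
\[
-2a=\tr\bigl(\operatorname{adj}(A_{0})\,L\bigr),\qquad \operatorname{adj}(A_{0})=-A_{0}\quad(\text{traceless }2\times2).
\]
So $a=\tfrac12\tr(A_{0}L)$, which needs only the entries of $L$. These are derivatives of $P+xP_{x}$, $xP_{y}$, $yQ_{x}$, $Q+yQ_{y}$ along $q_{0}$, for instance $\partial_{q_{0}}(xP_{y})=q_{0,1}P_{y}+x\alpha_{4}$. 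I then substitute \eqref{eq:A0}, \eqref{eq:q0}, and \eqref{eq:alpha4}, together with the analogous expression for $\beta_{4}$, which I will derive by symmetry. I expect the overall factor $\sigma$ to emerge because each entry of $A_{0}$ except the $(2,1)$ entry carries $\sigma$, and the $(2,1)$ term pairs with $L_{12}$, which also contains $\sigma$ through $q_{0}$.

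\textbf{Zeros and main obstacle.} The equivalences \eqref{eq:zeros} follow immediately from \eqref{eq:abclosed}, given $\Delta=\sigma+\tau\neq0$ and $\tau\neq0$. The main obstacle is the algebraic simplification for $a$: the raw expression has denominators $x_{0}y_{0}\Delta\tau$, and $\beta_{4}$ has to be expressed cleanly. I would first derive $\beta_{4}=-\beta_{3}/y_{0}-\tau^{2}/(x_{0}y_{0}\Delta)$ as the companion of \eqref{eq:alpha4}, check it against \eqref{eq:param}, and work entirely in the variables $(\sigma,\tau,\alpha_{3},\beta_{3},\alpha_{2},\beta_{2})$, eliminating $\alpha_{2}$ and $\beta_{2}$ where convenient via \eqref{eq:sigmatau}. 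Finally I would cross-check the result against \eqref{eq:abdef} numerically, as in Remark~\ref{rem:computational}.
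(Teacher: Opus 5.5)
Your route is the paper's own. The paper proves Theorem~\ref{thm:ab} in the single sentence ``apply Theorem~\ref{thm:A} with \eqref{eq:q0}, differentiate $\tr J$ and $\dt J$ along $q_0$, substitute \eqref{eq:param}''. Your reduction $a=\tfrac12\tr(A_0L)$ via $\operatorname{adj}(A_0)=-A_0$ is the Jacobi-formula step already used in the proof of Theorem~\ref{thm:A}.

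Two of the intermediate formulas you wrote down are wrong, however, and as written they would stop the simplification.

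\emph{The formula for $\beta_4$.} The companion of \eqref{eq:alpha4} has the opposite sign:
\[
\beta_{4}=-\frac{\beta_{3}}{y_{0}}+\frac{\tau^{2}}{x_{0}y_{0}\Delta}.
\]
The reason is that the swap $x\leftrightarrow y$ sends $(\sigma,\tau,\Delta)$ to $(-\tau,-\sigma,-\Delta)$. You can read it off directly from $(A_0)_{21}=y_0(\beta_3+\beta_4y_0)=y_0\tau^2/(x_0\Delta)$, and your planned check against \eqref{eq:param} would catch it. With your sign, both $b$ (through the term $y_0\beta_4$ in $\nabla\tr J$) and $a$ come out wrong.

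\emph{The entry $L_{12}$.} Since $xP_y=\alpha_3x+\alpha_4x^2$ does not depend on $y$, one has $\partial_{q_0}(xP_y)=q_{0,1}(P_y+x\alpha_4)$, not $q_{0,1}P_y+x\alpha_4$. Only the corrected form makes $L_{12}$ proportional to $q_{0,1}$, which is exactly what your heuristic for the factor $\sigma$ relies on.

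With these corrections the computation closes in a few lines. Put $\xi=x_0\sigma/(y_0\tau)$. Then
\[
\nabla\tr J(p)=\bigl(2\tau/x_0-\beta_3,\;-\alpha_3-2\sigma/y_0\bigr),
\]
which gives $b=-\alpha_3-x_0\beta_3\sigma/(y_0\tau)$. The entries of $L$ are
\[
L_{11}=-\alpha_3,\quad
L_{12}=-\xi\Bigl(\alpha_3+\frac{2\sigma^2}{y_0\Delta}\Bigr),\quad
L_{21}=-\beta_3+\frac{2\tau^2}{x_0\Delta},\quad
L_{22}=-\frac{x_0\beta_3\sigma}{y_0\tau}.
\]
As a check, $L_{11}+L_{22}=b$. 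Hence
\[
\tfrac12\tr(A_0L)=\frac{\sigma}{\Delta}\Bigl(-\alpha_3\tau+\frac{\sigma(x_0\beta_3-\tau)}{y_0}\Bigr)
=\frac{\sigma(\beta_2\sigma-\alpha_3\tau)}{\sigma+\tau},
\]
using $x_0\beta_3-\tau=y_0\beta_2$ and \eqref{eq:Delta}. The equivalences \eqref{eq:zeros} then follow, as you say, from $\Delta\neq0$ and $y_0\tau\neq0$.
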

	
	\subsection{The branches of \texorpdfstring{$\{a=0\}$}{a=0} when
		\texorpdfstring{$\tau\neq0$}{tau nonzero}}
	
	\begin{corollary}\label{cor:equiv}
		Let $(x_{0},y_{0})$ with $x_{0}y_{0}\neq0$ be an equilibrium of
		\eqref{eq:family}--\eqref{eq:PQ} at which the Jacobian is nilpotent and
		nonzero.  The following are equivalent:
		\begin{enumerate}[label=\rm(\roman*)]
			\item $a=0$;
			\item $P$ and $Q$ have a common nonconstant factor vanishing at
			$(x_{0},y_{0})$;
			\item $(x_{0},y_{0})$ is not an isolated equilibrium.
		\end{enumerate}
		No further hypothesis is required; in particular $\Delta$ and $\tau$ may
		vanish.
	\end{corollary}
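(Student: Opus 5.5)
We apply Theorem~\ref{thm:C} with $A=P$, $B=Q$ and $g_{1}=g_{2}=1$. Since $P$ and $Q$ are polynomials, hence Laurent polynomials, and the equilibrium lies in the torus with nilpotent nonzero Jacobian, Theorem~\ref{thm:C} applies. We prove the cycle (ii)$\Rightarrow$(iii)$\Rightarrow$(ii) and then connect (i) to this pair through the multiplicity.

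First, (ii)$\Rightarrow$(i) and (ii)$\Rightarrow$(iii) are exactly Theorem~\ref{thm:C}(i). For (iii)$\Rightarrow$(ii), argue by contraposition. If $P$ and $Q$ have no common nonconstant factor vanishing at $p$, then their gcd $h$ satisfies $h(p)\neq0$, and Lemma~\ref{lem:BKK}(ii) gives that $p$ is isolated in $V(P)\cap V(Q)$. Near $p$, $x$ and $y$ are units, so the equilibria of \eqref{eq:family} near $p$ are exactly the common zeros of $P$ and $Q$; hence $p$ is an isolated equilibrium. This also covers the reducibility subtlety: a common factor not vanishing at $p$ is a unit at $p$ and plays no role.

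It remains to prove (i)$\Rightarrow$(ii), again by contraposition. Assume no common factor vanishes at $p$. Theorem~\ref{thm:C}(ii) then gives $\operatorname{ord}f=\mu_{p}(P,Q)\le\MV(\Newt P,\Newt Q)\le2$. The last inequality holds because $\Newt P$ and $\Newt Q$ are contained in $[0,1]^{2}$, by monotonicity of the mixed volume and Remark~\ref{rem:square}. Lemma~\ref{lem:mult} gives $\operatorname{ord}f\ge2$, so $\operatorname{ord}f=2$, and the same lemma yields $a\neq0$.

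The only point requiring care is that Lemma~\ref{lem:mult} is stated for real analytic fields with real coefficients and a real equilibrium, while Theorem~\ref{thm:C} requires $p\in(\C^{*})^{2}\cap\R^{2}$. Both conditions hold here, since $(x_{0},y_{0})$ is a real point with $x_{0}y_{0}\neq0$ and \eqref{eq:family} is polynomial. Note also that none of the steps uses the parametrization \eqref{eq:param}, so $\Delta$ and $\tau$ are allowed to vanish, as the statement claims.

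I expect no serious obstacle. The one step to state carefully is the nondegeneracy of the polytopes. If a corner coefficient of $P$ or $Q$ vanishes, $\MV$ may be smaller than $2$, but the bound $\MV\le2$ still holds by monotonicity, which is all the argument uses. If $\MV\le1$, then Corollary~\ref{cor:cusp}(i) already forces (ii), so the case splits are consistent.
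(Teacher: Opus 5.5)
Your proof is correct and follows the paper's argument: (ii) gives (i) and (iii) by Theorem~\ref{thm:C}(i), and when (ii) fails, Theorem~\ref{thm:C}(ii) together with $\MV(\Newt P,\Newt Q)\le2$ yields isolation and $\operatorname{ord}f=2$, hence $a\neq0$ by Lemma~\ref{lem:mult}. Splitting the contrapositive into (iii)$\Rightarrow$(ii) and (i)$\Rightarrow$(ii), and appealing explicitly to monotonicity when corner coefficients vanish, only spells out what the paper's one-line proof leaves implicit.
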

	
	\begin{proof}
		Here $A=P$, $B=Q$ and $g_{1}=g_{2}=1$, and
		$\MV(\Newt P,\Newt Q)\le2$ by Remark~\ref{rem:square}.  Then (ii)
		implies (i) and (iii) by Theorem~\ref{thm:C}(i), and if (ii) fails then
		Theorem~\ref{thm:C}(ii) gives $\mu\le2$, hence $a\neq0$ by
		Lemma~\ref{lem:mult} and $(x_{0},y_{0})$ isolated.
	\end{proof}
	
	\begin{proposition}\label{prop:branches}
		Assume in addition $\Delta\neq0$, $\tau\neq0$.  The locus $\{a=0\}$
		consists, \emph{within the region $\tau\neq0$}, of exactly two
		branches, and on each of them the common factor
		of Corollary~\ref{cor:equiv} is explicit:
		\begin{enumerate}[label=\rm(\roman*)]
			\item if $\sigma=0$ then \eqref{eq:param} reduces to
			$\alpha_{1}=y_{0}\alpha_{3}$, $\beta_{1}=x_{0}\beta_{3}$,
			$\alpha_{4}=-\alpha_{3}/x_{0}$, $\beta_{4}=-\beta_{2}/x_{0}$, and
			\begin{equation}\label{eq:factor1}
				P=\frac{\alpha_{3}}{x_{0}}\,(x-x_{0})(y_{0}-y),
				\qquad
				Q=\frac{1}{x_{0}}\,(x-x_{0})(x_{0}\beta_{3}-\beta_{2}y),
			\end{equation}
			so the whole line $\{x=x_{0}\}$ consists of equilibria;
			\item if $\alpha_{2}\beta_{2}=\alpha_{3}\beta_{3}$, equivalently
			$\beta_{2}\sigma=\alpha_{3}\tau$, then $P$ and $Q$ are proportional and
			the whole conic $\{Q=0\}$ consists of equilibria.  When $\beta_{2}\neq0$
			the proportionality reads
			$\alpha_{1}/\beta_{1}=\alpha_{2}/\beta_{3}
			=\alpha_{3}/\beta_{2}=\alpha_{4}/\beta_{4}$, that is
			$P=(\alpha_{3}/\beta_{2})\,Q$; when $\beta_{2}=0$ the same conclusion
			holds with the ratio written as $P=(\alpha_{2}/\beta_{3})\,Q$, which is
			legitimate because $\tau\neq0$ and $\beta_{2}=0$ force
			$\beta_{3}\neq0$.
		\end{enumerate}
	\end{proposition}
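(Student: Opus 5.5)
Under the standing assumptions $\Delta\neq0$ and $\tau\neq0$, I take the description of $\{a=0\}$ from \eqref{eq:zeros} in Theorem~\ref{thm:ab}: $a=0$ if and only if $\sigma=0$ or $\beta_{2}\sigma=\alpha_{3}\tau$. The two branches are therefore given, and the work is to exhibit the common factor on each of them by direct substitution into \eqref{eq:param}. The result must be consistent with Corollary~\ref{cor:equiv}, which already guarantees that some common factor vanishing at $(x_{0},y_{0})$ exists. The explicit factor serves both as the content of the proposition and as a check on that corollary.

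\emph{Branch (i).} Put $\sigma=0$, that is $x_{0}\alpha_{2}=y_{0}\alpha_{3}$, so $\Delta=\tau$.
\begin{itemize}
\item For $\alpha_{1}$: eliminating $\alpha_{2}=y_{0}\alpha_{3}/x_{0}$, the numerator is $-y_{0}^{2}\alpha_{3}^{2}+y_{0}^{2}\alpha_{3}(\alpha_{3}-\beta_{2})+x_{0}y_{0}\alpha_{3}\beta_{3}=y_{0}\alpha_{3}\tau$. This gives $\alpha_{1}=y_{0}\alpha_{3}$.
\item For $\beta_{1}$: the numerator factors as $x_{0}\beta_{3}\tau$, giving $\beta_{1}=x_{0}\beta_{3}$.
\item For $\alpha_{4}$: this follows at once from \eqref{eq:alpha4} with $\sigma=0$.
\item For $\beta_{4}$: the numerator $y_{0}^{2}\beta_{2}^{2}-x_{0}y_{0}\beta_{3}\beta_{2}$ (after using $\sigma=0$) equals $-y_{0}\beta_{2}\tau$. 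Hence $\beta_{4}=-\beta_{2}/x_{0}$.
\end{itemize}
Then I expand the two products in \eqref{eq:factor1} and compare coefficients with \eqref{eq:PQ}. For instance, $\tfrac{\alpha_{3}}{x_{0}}(x-x_{0})(y_{0}-y)$ has constant term $-\alpha_{3}y_{0}$, $x$-coefficient $\alpha_{3}y_{0}/x_{0}=\alpha_{2}$, $y$-coefficient $\alpha_{3}$, and $xy$-coefficient $-\alpha_{3}/x_{0}$. The factor $x-x_{0}$ is shared, so the line $\{x=x_{0}\}$ consists of equilibria.

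\emph{Branch (ii).} First I check the algebraic equivalence. Expanding gives $\beta_{2}\sigma-\alpha_{3}\tau=x_{0}(\alpha_{2}\beta_{2}-\alpha_{3}\beta_{3})$, and since $x_{0}\neq0$ the two conditions agree. To prove proportionality when $\beta_{2}\neq0$, set $k=\alpha_{3}/\beta_{2}$. Then $\alpha_{2}=k\beta_{3}$, so $\sigma=k\tau$ and $\Delta=(1+k)\tau$. Substituting into \eqref{eq:param}:
\begin{itemize}
\item the $\alpha_{1}$ numerator becomes $k$ times the $\beta_{1}$ numerator;
\item likewise the $\alpha_{4}$ numerator becomes $k$ times the $\beta_{4}$ numerator.
\end{itemize}
Both are polynomial identities that I would verify by expansion after eliminating $\alpha_{2},\alpha_{3}$. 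Hence $P=kQ$. Since $Q(x_{0},y_{0})=0$ and $\{Q=0\}$ is a nonconstant curve, that conic consists of equilibria. When $\beta_{2}=0$, the condition $\tau\neq0$ forces $\beta_{3}\neq0$, and the relation reads $\alpha_{3}\beta_{3}=0$, so $\alpha_{3}=0$. I then set $k=\alpha_{2}/\beta_{3}$ and repeat the same substitution.

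\emph{Exhaustiveness.} "Exactly two branches" is just \eqref{eq:zeros}. I would also note that Corollary~\ref{cor:equiv} confirms the result independently: on each branch $a=0$ and the equilibrium is non-isolated, consistent with the factors found.

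The main obstacle is the proportionality verification in (ii), specifically the $\alpha_{4}$ versus $\beta_{4}$ comparison. Those numerators are least symmetric, and $\Delta=(1+k)\tau$ might vanish. That degenerate case is excluded by the standing hypothesis $\Delta\neq0$, which I would point out explicitly.
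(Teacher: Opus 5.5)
Your proposal is correct and follows the paper's proof. Like the paper, you substitute each branch of \eqref{eq:zeros} into \eqref{eq:param} and \eqref{eq:PQ}, and you handle the $\beta_{2}=0$ subcase the same way, through $\alpha_{3}=0$ and $k=\alpha_{2}/\beta_{3}$. The identities you deferred in (ii) do hold: once $\alpha_{3}=k\beta_{2}$ and $\alpha_{2}=k\beta_{3}$, the $\alpha_{1}$ and $\alpha_{4}$ numerators are exactly $k$ times the $\beta_{1}$ and $\beta_{4}$ numerators, and this expansion never divides by $\beta_{2}$, so the same computation also covers the $\beta_{2}=0$ case.
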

	
	\begin{proof}
		Direct substitution of the stated values into \eqref{eq:param} and
		\eqref{eq:PQ}; the branches are those of \eqref{eq:zeros}.  For~(ii)
		with $\beta_{2}=0$, the condition $\beta_{2}\sigma=\alpha_{3}\tau$ and
		$\tau\neq0$ give $\alpha_{3}=0$, whence $\sigma=x_{0}\alpha_{2}$ and
		$\tau=x_{0}\beta_{3}$, and \eqref{eq:param} yields
		$\alpha_{i}=(\alpha_{2}/\beta_{3})\beta_{i}$ for $i=1,4$ as well.
	\end{proof}
	
	\begin{remark}\label{rem:beta2zero}
		The two branches of Proposition~\ref{prop:branches} are \emph{not}
		distinguished by the vanishing of $\sigma$ alone.  It is tempting to
		argue that when $\beta_{2}=0$ the second alternative in
		\eqref{eq:zeros} reduces to $\alpha_{3}=0$, and that then
		$a=\beta_{2}\sigma^{2}/\Delta=0$ forces $\sigma=0$; the implication is
		false, since $\beta_{2}=0$ makes that expression vanish for
		\emph{every} $\sigma$.  Concretely, take
		$x_{0}=y_{0}=1$, $\alpha_{2}=\beta_{3}=1$,
		$\alpha_{3}=\beta_{2}=0$.  Then \eqref{eq:param} gives
		$\alpha_{1}=\beta_{1}=\tfrac12$,
		$\alpha_{4}=\beta_{4}=-\tfrac12$, so that
		\[
		P=Q=-\tfrac12+x-\tfrac12\,xy ,
		\]
		the equilibrium is non-isolated and $a=0$, while
		$\sigma=1\neq0$.  This is the proportional-nullcline branch~(ii), not
		branch~(i).  This is why~(ii) is stated through
		$\alpha_{2}\beta_{2}=\alpha_{3}\beta_{3}$, without dividing by
		$\beta_{2}$.
	\end{remark}
	
	\begin{remark}\label{rem:whatisatsigma0}
		The germ at a point with $\sigma=0$ deserves a comment, because its
		normal-form coefficients are perfectly well defined and might be
		misread as a codimension-three degeneracy.  By \eqref{eq:factor1} the
		field is $X=(x-x_{0})\,Y$ with $Y$ quadratic, the Kuznetsov basis is
		finite, and $a=0$, $b=-\alpha_{3}$.  Nevertheless, in the Takens form
		the whole line of equilibria is the $u$-axis, so $f\equiv0$: all the
		coefficients of $f$ vanish, not only the quadratic one.  The
		singularity is not isolated and belongs to no finite-codimension
		stratum of the nilpotent classification.  The same happens on the
		branch $\alpha_{2}\beta_{2}=\alpha_{3}\beta_{3}$.
	\end{remark}
	
	\begin{remark}\label{rem:tauzero}
		The normalization \eqref{eq:q0} and the closed formulas
		\eqref{eq:abclosed} presuppose $\tau\neq0$, and so does
		Proposition~\ref{prop:branches}.  The excluded locus is not empty, and
		by the symmetry $x\leftrightarrow y$ of
		\eqref{eq:family}--\eqref{eq:PQ} it is the mirror image of
		branch~(i): where $\sigma=0$ produces the vertical line
		$\{x=x_{0}\}$ of equilibria, $\tau=0$ produces the horizontal line
		$\{y=y_{0}\}$.  For instance, with $x_{0}=y_{0}=1$ and
		\[
		P=x(1-y),\qquad Q=(x-1)(1-y),
		\]
		one has $\alpha_{2}=1$, $\alpha_{3}=0$, $\beta_{2}=\beta_{3}=1$,
		hence $\sigma=1$, $\tau=0$, $\Delta=1$, and
		\[
		DX(1,1)=\begin{pmatrix}0&-1\\ 0&0\end{pmatrix},
		\]
		nilpotent of rank one.  The common factor is $y-1$ and the whole line
		$\{y=1\}$ consists of equilibria, so $a=0$ by
		Corollary~\ref{cor:equiv}, as Theorem~\ref{thm:A} confirms with
		$q_{0}=(1,0)^{\top}$.  Substituting into \eqref{eq:abclosed} would
		instead return $a=1$: the formula is simply not applicable when
		$\tau=0$.  Corollary~\ref{cor:equiv} and the mixed-volume obstruction
		are unaffected, since neither uses \eqref{eq:q0}.
	\end{remark}
	
	\begin{remark}\label{rem:deltazero}
		The explicit parametrization \eqref{eq:param} is derived under
		$\Delta\neq0$.  The configurations with $\Delta=0$ carrying a nonzero
		nilpotent Jacobian are non-isolated common-factor configurations, hence
		covered by Corollary~\ref{cor:equiv}; they play no role in the
		mixed-volume obstruction, whose proof never uses \eqref{eq:param}.
	\end{remark}
	
	\begin{proposition}\label{prop:classification}
		On the Bogdanov--Takens set of \eqref{eq:family}--\eqref{eq:PQ}, under
		\begin{equation}\label{eq:noncollision}
			x_{0}y_{0}\Delta\tau\beta_{2}\beta_{3}\neq0,
			\qquad
			\alpha_{3}\neq0,
			\qquad
			y_{0}\beta_{2}+x_{0}\beta_{3}\neq0,
		\end{equation}
		the locus $ab=0$ meets each one-dimensional slice in which the
		remaining parameters are held fixed in exactly three distinct points,
		described in the variable $\sigma$ by
		\[
		\sigma^{(1)}=0,
		\qquad
		\sigma^{(2)}=-\frac{y_{0}\alpha_{3}\tau}{x_{0}\beta_{3}},
		\qquad
		\sigma^{(3)}=\frac{\alpha_{3}\tau}{\beta_{2}} ,
		\]
		and:
		\begin{enumerate}[label=\rm(\roman*)]
			\item at $\sigma^{(1)}$ and $\sigma^{(3)}$ one has $a=0$ and the
			equilibrium is not isolated;
			\item at $\sigma^{(2)}$ one has $b=0$ and
			\[
			a=\frac{y_{0}\alpha_{3}^{2}\,\tau\,(y_{0}\beta_{2}+x_{0}\beta_{3})}
			{x_{0}\beta_{3}\,(x_{0}\beta_{3}-y_{0}\alpha_{3})},
			\]
			which is nonzero under \eqref{eq:noncollision}, since the numerator
			carries the factors $\alpha_{3}$, $\tau$ and
			$y_{0}\beta_{2}+x_{0}\beta_{3}$ and, at $\sigma^{(2)}$,
			$\Delta=\tau(x_{0}\beta_{3}-y_{0}\alpha_{3})/(x_{0}\beta_{3})$, so
			that $\Delta\neq0$ and $\tau\neq0$ force the denominator to be
			nonzero as well; the equilibrium is
			isolated of multiplicity two, and the point is a degenerate
			Bogdanov--Takens singularity of cusp type, of codimension at least
			three \cite{DRS1987,Kuznetsov2005}.
		\end{enumerate}
	\end{proposition}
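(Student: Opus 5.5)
The plan is to work entirely from the closed formulas of Theorem~\ref{thm:ab} and to treat $\sigma$ as the free coordinate along the slice. With $x_{0},y_{0},\alpha_{2},\alpha_{3},\beta_{2},\beta_{3}$ fixed, $\tau$ and $\Delta=\sigma+\tau$ are affine in the slice parameter. The BT set is parametrized by \eqref{eq:param}. Along a slice in which the remaining parameters are held fixed, I will read $\sigma$ as the varying quantity with $\tau$ held fixed. This is legitimate because, for fixed $y_{0},\alpha_{3}$, the map $x_{0}\mapsto\sigma$ or $\alpha_{2}\mapsto\sigma$ is affine and nonconstant.

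By \eqref{eq:zeros}, $ab=0$ is the union of three affine conditions in $\sigma$:
\begin{itemize}
\item[] $\sigma=0$, from the factor $\sigma$ in $a$;
\item[] $\beta_{2}\sigma=\alpha_{3}\tau$, giving $\sigma^{(3)}$ since $\beta_{2}\neq0$;
\item[] $x_{0}\beta_{3}\sigma=-y_{0}\alpha_{3}\tau$, giving $\sigma^{(2)}$ since $x_{0}\beta_{3}\neq0$.
\end{itemize}
Each has exactly one root because the leading coefficients are nonzero under \eqref{eq:noncollision}.

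Next I check that the three roots are distinct. First, $\sigma^{(2)}$ and $\sigma^{(3)}$ are nonzero because $\alpha_{3}\tau y_{0}\neq0$. Second, $\sigma^{(2)}=\sigma^{(3)}$ would give $-y_{0}\beta_{2}=x_{0}\beta_{3}$, which is excluded by the last condition in \eqref{eq:noncollision}.

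For (i), at $\sigma^{(1)}$ and $\sigma^{(3)}$ we have $a=0$ by \eqref{eq:abclosed}. Proposition~\ref{prop:branches}(i),(ii), or directly Corollary~\ref{cor:equiv}, then gives a common factor and hence non-isolation. Note that at $\sigma^{(3)}$ the condition $\beta_{2}\sigma=\alpha_{3}\tau$ is equivalent to $\alpha_{2}\beta_{2}=\alpha_{3}\beta_{3}$ by expanding \eqref{eq:sigmatau}.

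For (ii), I substitute $\sigma^{(2)}$ into the $a$-formula. The factor $\beta_{2}\sigma-\alpha_{3}\tau$ becomes $-\alpha_{3}\tau(y_{0}\beta_{2}+x_{0}\beta_{3})/(x_{0}\beta_{3})$. The denominator becomes
\[
\sigma+\tau=\tau(x_{0}\beta_{3}-y_{0}\alpha_{3})/(x_{0}\beta_{3}).
\]
Multiplying by $\sigma^{(2)}$ and simplifying yields the displayed expression for $a$. Every factor is nonzero: $\Delta\neq0$ together with $\tau\neq0$ forces $x_{0}\beta_{3}-y_{0}\alpha_{3}\neq0$. Since $a\neq0$, Corollary~\ref{cor:equiv} gives isolation and Lemma~\ref{lem:mult} gives $\mu=2$. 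Corollary~\ref{cor:cusp}(ii) gives cusp type, and $b=0$ puts the point in the $(m,n)=(2,n\ge2)$ stratum of \cite{DRS1987}, so its codimension is at least three.

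The main obstacle is making the notion of ``slice'' precise. I must check that the chosen free parameter really moves $\sigma$ while keeping $\tau,\alpha_{3},\beta_{2},\beta_{3},x_{0},y_{0}$ fixed, for instance by letting $\alpha_{2}$ vary, which changes $\sigma$ only. I must also check that \eqref{eq:noncollision} continues to hold at the three points, in particular that $\Delta\neq0$ at $\sigma^{(1)}$ and $\sigma^{(3)}$, so that \eqref{eq:abclosed} remains applicable there. At $\sigma^{(1)}$ this holds because $\Delta=\tau\neq0$. At $\sigma^{(3)}$ we have $\Delta=\tau(\alpha_{3}+\beta_{2})/\beta_{2}$, so the case $\alpha_{3}+\beta_{2}=0$ would need a separate remark.
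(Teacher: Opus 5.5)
Your proof is correct and follows the paper's own route: read the three roots off \eqref{eq:zeros}, separate them using $\alpha_{3}\tau\neq0$ and $y_{0}\beta_{2}+x_{0}\beta_{3}\neq0$, substitute $\sigma^{(2)}$ into \eqref{eq:abclosed}, and conclude with Corollary~\ref{cor:equiv}, Lemma~\ref{lem:mult} and the cusp criterion for $m=2$. The caveat you raise at $\sigma^{(3)}$, where $\Delta=\tau(\alpha_{3}+\beta_{2})/\beta_{2}$, is equally implicit in the paper and is settled by reading $\Delta\neq0$ in \eqref{eq:noncollision} pointwise at each of the three points, as the paper does at $\sigma^{(2)}$; also, take the slice by varying $\alpha_{2}$ alone, as you eventually do, not $x_{0}$, since $x_{0}$ moves $\tau$ as well.
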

	
	\begin{proof}
		The three values are those of \eqref{eq:zeros}.  They are pairwise
		distinct precisely under \eqref{eq:noncollision}: one has
		$\sigma^{(2)}=\sigma^{(1)}$ or $\sigma^{(3)}=\sigma^{(1)}$ if and only
		if $\alpha_{3}\tau=0$, while
		\[
		\sigma^{(2)}-\sigma^{(3)}
		=-\frac{\alpha_{3}\tau\,(y_{0}\beta_{2}+x_{0}\beta_{3})}
		{x_{0}\beta_{2}\beta_{3}} ,
		\]
		which vanishes exactly when $\alpha_{3}\tau=0$ or
		$y_{0}\beta_{2}+x_{0}\beta_{3}=0$.  The rest is immediate from
		\eqref{eq:abclosed}, Corollary~\ref{cor:equiv} and
		Proposition~\ref{prop:branches}.  For~(ii), $a\neq0$ and
		Lemma~\ref{lem:mult} give $m=2$, while $b=0$ gives $n\ge2$; every such
		germ is a cusp \cite{Dumortier1977,DRS1987,DLA2006}.
	\end{proof}
	
	\begin{remark}\label{rem:noncollision}
		The last two conditions in \eqref{eq:noncollision} are not decorative.
		If $\alpha_{3}=0$ the three values collapse to $\sigma=0$, and if
		$y_{0}\beta_{2}+x_{0}\beta_{3}=0$ then $\sigma^{(2)}=\sigma^{(3)}$: the
		$b=0$ point merges with an $a=0$ point and the equilibrium ceases to be
		isolated.  The second coincidence is visible in the displayed value of
		$a$ at $\sigma^{(2)}$, whose numerator carries precisely the factor
		$y_{0}\beta_{2}+x_{0}\beta_{3}$.  Note also that the statement
		concerns the three points cut out on a one-dimensional slice, not the
		number of irreducible components of $\{ab=0\}$ in the full parameter
		space.
	\end{remark}
	
	\begin{remark}\label{rem:exactcodim}
		Proposition~\ref{prop:classification}(ii) asserts codimension at least
		three and not exactly three.  The codimension of a nilpotent
		singularity is determined by the pair $(m,n)$ with $m=\ordop f$ and
		$n=\ordop g$; here $m=2$ is furnished by Lemma~\ref{lem:mult} and
		$b=0$ gives $n\ge2$, but the equality $n=2$, which is what the cusp
		case of codimension three requires, is the nonvanishing of the
		coefficient of $u^{2}v$ in the Takens form.  That coefficient is not
		computed anywhere in this paper, and by Corollary~\ref{cor:equiv} it
		cannot be: the mixed volume bounds $m$ and imposes no restriction
		whatever on $n$.  The distinction is immaterial for the topological
		type, which is a cusp for every $n\ge1$ once $m=2$.
		
		We stress what is and is not asserted.  The bound leaves $n$
		undetermined; it does not exhibit systems in the class realizing
		arbitrarily large values of $n$.  The absence of an upper bound is not
		an existence statement, and constructing sparse Kolmogorov families
		with prescribed $n$ is a separate question, not addressed here.
	\end{remark}

	\section{Two ecological models}\label{sec:models}
	
	\subsection{Model 1: predator satiation}
	
	Consider
	\begin{equation}\label{eq:model1}
		\dot x = x(1-y)-\varepsilon x^{2},
		\qquad
		\dot y = -\gamma y+\frac{xy^{2}}{n+y},
		\qquad \varepsilon,\gamma,n>0 .
	\end{equation}
	Here $\widetilde P=1-\varepsilon x-y$ and
	$\widetilde Q=-\gamma+xy/(n+y)$, so that the separate clearing of
	Remark~\ref{rem:origin}(a) gives
	\begin{equation}\label{eq:AB1}
		A=1-\varepsilon x-y,\qquad B=-\gamma(n+y)+xy,
		\qquad g_{1}=1,\quad g_{2}=n+y .
	\end{equation}
	Both supports lie in the unit square, so
	$\MV(\Newt A,\Newt B)\le\MV([0,1]^{2},[0,1]^{2})=2$ by monotonicity.
	Moreover $A$ is linear, hence irreducible, and
	$B|_{\{A=0\}}=-\varepsilon x^{2}+(1+\gamma\varepsilon)x-\gamma(n+1)$
	is a nonzero polynomial because $\varepsilon>0$; therefore $A\nmid B$
	and $A,B$ are coprime.  By Theorem~\ref{thm:C}(ii) and
	Corollary~\ref{cor:cusp}, every interior equilibrium with nilpotent
	nonzero Jacobian has multiplicity two and $a\neq0$.
	
	Note that the common clearing of Remark~\ref{rem:origin}(b) would give
	$P=(n+y)(1-\varepsilon x-y)$, which is of degree two in $y$; the system
	\eqref{eq:model1} is therefore \emph{not} orbitally equivalent to a
	member of \eqref{eq:family}--\eqref{eq:PQ}, and the closed formulas
	\eqref{eq:abclosed} do not apply to it.  It is Theorem~\ref{thm:C},
	not Corollary~\ref{cor:equiv}, that governs this model.
	
	Writing $y_{0}=\zeta_{0}/[2(1+\zeta_{0})]$, $\zeta_{0}>0$, the
	Bogdanov--Takens set of \eqref{eq:model1} is the curve
	\begin{equation}\label{eq:BT1}
		x=\frac{(2+\zeta_{0})^{3}}{4\zeta_{0}(1+\zeta_{0})},
		\quad
		\gamma=\frac{(2+\zeta_{0})^{2}}{2\zeta_{0}(1+\zeta_{0})},
		\quad
		\varepsilon=\frac{2\zeta_{0}}{(2+\zeta_{0})^{2}},
		\quad
		n=\frac{\zeta_{0}^{2}}{4(1+\zeta_{0})} .
	\end{equation}
	Applying Theorem~\ref{thm:A} along \eqref{eq:BT1},
	\begin{equation}\label{eq:ab1}
		a(\zeta_{0})=-\frac{(\zeta_{0}+2)^{2}}{2\zeta_{0}(\zeta_{0}+1)},
		\qquad
		b(\zeta_{0})=\frac{\zeta_{0}-2}{\zeta_{0}} .
	\end{equation}
	Thus $a<0$ for all $\zeta_{0}>0$, as predicted, and $b$ has the unique
	positive zero $\zeta_{0}=2$, at which
	\[
	(x,y,\gamma,\varepsilon,n)=
	\Bigl(\tfrac83,\tfrac13,\tfrac43,\tfrac14,\tfrac13\Bigr),
	\qquad a=-\tfrac43,\qquad b=0 .
	\]
	This is a degenerate Bogdanov--Takens point of cusp type, of
	codimension at least three by Remark~\ref{rem:exactcodim}, and it is
	the only degenerate point on the curve.
	
	\subsection{Model 2: mutual interference}
	
	Consider
	\begin{equation}\label{eq:model2}
		\dot x = x-\frac{xy}{(1+\alpha x)(1+\beta y)},
		\qquad
		\dot y = -\gamma y+\frac{xy}{(1+\alpha x)(1+\beta y)} ,
	\end{equation}
	with $\alpha,\beta,\gamma>0$.  Both equations have the same denominator
	$g=(1+\alpha x)(1+\beta y)$, so here the two reductions of
	Remark~\ref{rem:origin} coincide, and
	\[
	A=g-y,\qquad B=-\gamma g+x
	\]
	are affine-bilinear: the time rescaling $dt/d\tau=g$, that is
	$d\tau=dt/g$, puts \eqref{eq:model2} inside
	\eqref{eq:family}--\eqref{eq:PQ}.  The
	Bogdanov--Takens set is
	\begin{equation}\label{eq:BT2}
		x=(1+\gamma)^{2},\quad
		y=\frac{(1+\gamma)^{2}}{\gamma},\quad
		\alpha=\frac{\gamma}{(1+\gamma)^{2}},\quad
		\beta=\frac{1}{(1+\gamma)^{2}} ,
	\end{equation}
	and Theorem~\ref{thm:A} gives
	\begin{equation}\label{eq:ab2}
		a(\gamma)=\frac{\gamma^{3}}{(1+\gamma)^{4}},
		\qquad
		b(\gamma)=-\frac{(\gamma-1)\gamma^{2}}{(1+\gamma)^{4}} .
	\end{equation}
	Again $a>0$ throughout, and $b$ vanishes only at $\gamma=1$, giving the
	degenerate point $(x,y,\gamma,\alpha,\beta)=(4,4,1,\tfrac14,\tfrac14)$
	with $a=\tfrac1{16}$, $b=0$.
	
	\subsection{Consistency with the general theory}
	
	Restricting the rescaled system to \eqref{eq:BT2} puts Model~2 in the
	form \eqref{eq:family}--\eqref{eq:PQ} with
	\[
	\alpha_{1}=-1,\quad
	\alpha_{2}=\frac{\gamma}{(1+\gamma)^{2}},\quad
	\alpha_{3}=-\frac{\gamma(\gamma+2)}{(1+\gamma)^{2}},\quad
	\alpha_{4}=\frac{\gamma}{(1+\gamma)^{4}},
	\]
	\[
	\beta_{1}=\gamma,\quad
	\beta_{2}=-\frac{\gamma}{(1+\gamma)^{2}},\quad
	\beta_{3}=\frac{2\gamma+1}{(1+\gamma)^{2}},\quad
	\beta_{4}=-\frac{\gamma^{2}}{(1+\gamma)^{4}} .
	\]
	Then $\sigma=\tau=2(1+\gamma)$ and $\Delta=4(1+\gamma)$, and
	\eqref{eq:abclosed} gives
	\[
	a_{\mathrm{pol}}=\gamma,
	\qquad
	b_{\mathrm{pol}}=-\frac{\gamma(\gamma-1)}{(1+\gamma)^{2}} ,
	\]
	in agreement with \eqref{eq:ab2} through
	Corollary~\ref{cor:orbital}, since $g$ takes the value
	$(1+\gamma)^{2}/\gamma$ at the equilibrium and
	$a_{\mathrm{pol}}=g^{2}a$, $b_{\mathrm{pol}}=gb$.  The degeneracy
	condition of \eqref{eq:zeros}, namely
	$x_{0}\beta_{3}\sigma+y_{0}\alpha_{3}\tau=0$, holds precisely at
	$\gamma=1$.  Finally $\sigma\neq0$ and
	$\alpha_{2}\beta_{2}-\alpha_{3}\beta_{3}\neq0$ along the whole curve,
	so no point of the family is of the non-isolated type of
	Proposition~\ref{prop:branches}, as Corollary~\ref{cor:equiv} requires.
	The degenerate point $\gamma=1$ is therefore of the kind described in
	Proposition~\ref{prop:classification}(ii): $\sigma=\sigma^{(2)}$,
	$b=0$, $a\neq0$, and the equilibrium is isolated of multiplicity two.
	
	\section{Conclusions}
	
	The Bogdanov--Takens coefficients are usually introduced through a
	coordinate-dependent normal-form reduction. Theorem~\ref{thm:A} shows that,
	for planar nilpotent singularities of rank one, they admit a substantially
	simpler intrinsic description: they are exactly the directional derivatives,
	along the kernel of the linearization, of the determinant and the trace of the
	Jacobian. Consequently, the two coefficients are determined entirely by the
	intrinsic invariants of the linear part together with the distinguished kernel
	direction, without the use of generalized eigenvectors or second-order
	multilinear forms.
	
	This intrinsic identity also provides a geometric interpretation of the
	Bogdanov--Takens nondegeneracy conditions. Along the equilibrium manifold of a
	parametrized family, the coefficients $a$ and $b$ measure the relative position
	of the kernel line field with respect to two natural hypersurfaces: the fold
	set and the neutrality set. The two possible losses of nondegeneracy therefore
	acquire an invariant geometric meaning, replacing the coordinate computations
	through which they are traditionally introduced.
	
	The second contribution is of a different nature. For Kolmogorov systems, the
	order of $f$ in the Takens normal form is identified with the multiplicity of
	the equilibrium and bounded above by the mixed volume of the associated Newton
	polytopes. Consequently, the obstruction to the vanishing of $a$ is not a
	model-dependent algebraic coincidence but a combinatorial property of the
	supports of the defining polynomials. The affine-bilinear class appears as the
	first nontrivial realization of this principle, characterized by the extremal
	value $\MV=2$. Equally important is the limitation of the result: the mixed
	volume constrains $\ordop f$, and therefore the first Bogdanov--Takens
	coefficient, but contains no information about $\ordop g$. The methods
	developed here thus reveal a genuine asymmetry between the two coefficients:
	one is controlled by combinatorial invariants of the germ, whereas the other
	remains beyond the reach of the present approach.
	
	The ecological families studied in
	Section~\ref{sec:models} serve as representative illustrations of the general
	theory rather than as its motivation. Their Bogdanov--Takens curves exhibit
	exactly the behavior predicted by the abstract results: the coefficient $a$
	never vanishes, each family possesses a unique degenerate point where $b=0$,
	and the intrinsic, geometric and combinatorial descriptions agree. At those
	points the singularity is a cusp of codimension at least three; as explained in
	Remark~\ref{rem:exactcodim}, the exact codimension depends on $\ordop g$ and
	therefore cannot be decided from the mixed-volume bound alone.
	
	Taken together, these results show that Bogdanov--Takens nondegeneracy admits
	three complementary descriptions. At the intrinsic level it is encoded by the
	directional derivatives of the Jacobian invariants; at the geometric level by
	the relative position of the kernel line field with respect to the fold and
	neutrality hypersurfaces; and, for Kolmogorov systems, at the combinatorial
	level by the Newton polytopes and their mixed volume. The convergence of these
	three viewpoints provides a unified framework in which algebraic, geometric and
	combinatorial structures describe complementary aspects of the same local
	bifurcation phenomenon while clearly identifying the boundary of the present
	theory.
	
	\section*{Acknowledgements}
	E. Chan-L\'opez acknowledges support from SECIHTI through the	``Estancias Posdoctorales por M\'exico'' program (CVU 422090). The author thanks A. Mart\'in-Ruiz (ICN-UNAM) for valuable comments
	and suggestions that contributed to the final version of the manuscript.
	
	\section*{Ethics declarations}
	\subsection*{Conflict of interest}
	The authors declare no conflicts of interest.
	
	\subsection*{Ethical Approval}
	Not applicable.
	
	\section*{Supplementary Material}
	The accompanying Wolfram Mathematica Notebook provides a self-contained
	symbolic verification of the results, recomputing all 81 assertions
	from scratch in exact arithmetic.

\end{document}